\renewcommand\proofname{Proof}
\newtheorem{lemma}{Lemma}
\newtheorem{thm}{Theorem}
\newtheorem{prop}{Proposition}
\newtheorem*{claim}{Claim}
\newtheorem*{nota}{Notations}
\newcommand{\IC}{{\mathbb C}}
\newcommand{\IK}{{\mathbb K}}
\newcommand{\IP}{{\mathbb P}} 
\newcommand{\IQ}{{\mathbb Q}} 
\newcommand{\IR}{{\mathbb R}}
\newcommand{\IZ}{{\mathbb Z}}
\newcommand{\CA}{{\mathcal A}}
\newcommand{\CB}{{\mathcal B}}
\newcommand{\CH}{{\mathcal H}}
\newcommand{\CO}{{\mathcal O}} 
\newcommand{\CP}{{\mathcal P}}
\newcommand{\CX}{{\mathcal X}}
\newcommand{\seq}{\subseteq}
\newcommand{\lam}{\lambda}
\newcommand{\la}{\langle}
\newcommand{\ra}{\rangle}
\newcommand{\ti}{\times}
\newcommand{\D}{\Delta}
\newcommand{\de}{\delta}
\newcommand{\ga}{\gamma}
\begin{document}
\title{Dimension four simply connected Voisin manifolds}
\author{Linsheng Wang}
%\address{BICMR}
\curraddr{}
\email{linsheng\_wang@outlook.com}
\thanks{}
\keywords{}
\date{}
\dedicatory{}
\begin{abstract}
Voisin constructed a series of examples concerning simply connected compact Kähler manifolds of even dimensions, which do not have the rational homotopy type of a complex projective manifold starting from dimension six. In this note, we prove that Voisin's examples of dimension four also does not have the rational homotopy type of a complex projective manifold. Oguiso constructed simply connected compact Kähler manifolds starting from dimension four, which can not deform to a complex projective manifold under a small deformation. We also prove that Oguiso's examples do not have the rational homotopy type of a complex projective manifold. 
\end{abstract}
\maketitle

\tableofcontents

\section{Introduction}

The Kodaira problem is that whether a compact Kähler manifold admits algebraic approximation. Here an \emph{algebraic approximation} of a compact Kähler manifold $X$ is a deformation $\CX\to \Delta$ of $X$ such that the subset of $\Delta$ parametrizing projective manifolds is dense. Kodaira \cite{Kodaira60} proved that any compact Kähler surface admits algebraic approximation. Recently Lin \cite{Lin17} proved an analogous result of Kähler threefolds. 

But starting from dimension $4$, Voisin \cite{Voisin04} constructed compact Kähler manifolds which do not have the rational homotopy type of, in particular, aren't deformation equivalent to a complex projective manifold. Here the \emph{rational homotopy type} of a manifold is the rational cohomology ring structure of it. Voisin \cite{Voisin04} also constructed simply connected examples of even dimensions starting from dimension $6$. 

In this note, we prove that the simply connected manifold $X_2$ of dimension $4$ constructed in \cite{Voisin04} also does not have the rational homotopy type of a complex projetive manifold. We also prove that $X_2\times M$ does not have the rational homotopy type of a complex projective manifold for any compact Kähler manifold $M$ with second Betti number $b_2(M)=1$. 

Oguiso \cite{Oguiso08} constructed simply connected examples using special automorphisms of K3 surfaces, which are rigid, hence admit no algebraic approximation. We will prove that Oguiso's examples do not have the rational homotopy type of a complex projective manifold. 

%Any compact Kähler manifold of algebraic codimension $1$ admits algebraic approximation.
{}
\begin{thm}
In any dimension $\ge 4$, there exists a simply connected compact Kähler manifold which do not have the rational homotopy type of a complex projective manifold. 
\end{thm}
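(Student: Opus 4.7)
The plan is to exhibit, for each $n\ge 4$, a single concrete simply connected compact Kähler $n$-fold, namely
\[
X_n := X_2\times \IP^{n-4},
\]
where $X_2$ is Voisin's simply connected Kähler $4$-fold recalled in the introduction, and $\IP^{n-4}$ is ordinary complex projective space (with the convention that $\IP^0$ is a point, so $X_4=X_2$). Each $X_n$ is a product of compact Kähler manifolds, hence compact Kähler; it is simply connected because both factors are; and it has complex dimension $n$.

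The key step is to quote the two results promised earlier in the introduction. First, for $n=4$ the manifold $X_4=X_2$ does not have the rational homotopy type of a projective manifold, which is the content of the main result of this note about Voisin's dimension~$4$ example. Second, for $n\ge 5$ the factor $\IP^{n-4}$ is a compact Kähler manifold with $b_2(\IP^{n-4})=1$, so the companion statement announced in the introduction gives that $X_2\times \IP^{n-4}$ likewise fails to have the rational homotopy type of a projective manifold. Combining the two cases yields a simply connected compact Kähler $n$-fold with the required property for every $n\ge 4$.

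I therefore expect the proof of the theorem itself to be essentially one sentence, deducing it as a corollary of the two prior theorems stated in the introduction. The real work — and the main obstacle — lies in those prior theorems rather than in this deduction: one must analyze the rational cohomology ring of $X_2$ (and of $X_2\times M$ with $b_2(M)=1$) finely enough to rule out the existence of a Hodge structure compatible with some polarization class coming from a projective deformation. Once that cohomological obstruction has been established, the passage to arbitrary dimension $n\ge 4$ is purely formal: multiplying by $\IP^{n-4}$ only adds a single new degree-$2$ class and a polynomial ring on it, which by the $b_2=1$ hypothesis does not interfere with the obstruction found on the $X_2$ factor.
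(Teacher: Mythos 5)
Your proposal is correct and matches the paper's intended derivation: the paper never writes out a separate proof of this theorem, but it is exactly the combination of Theorem \ref{main theorem} (the dimension-four case $X_2$) with Theorem \ref{main theorem 3} applied to $M=\IP^{n-4}$, which has $b_2=1$, and all the manifolds involved are simply connected compact Kähler. No gaps; the deduction is the one-sentence corollary you describe.
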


\begin{nota} \rm
For a free $\IZ$-module $V$, we denote $V_\IK=V\otimes_\IZ \IK$ where $\IK=\IQ, \IR$ or $\IC$. Let $X$ be a compact Kähler manifold of dimension $d$. We simply write $H^{p,p}(X,\IK)=H^{2p}(X, \IK) \cap H^{p,p}(X)$ and $(H^{p,q}\oplus H^{q,p})(X,\IK)=(H^{p,q}(X)\oplus H^{q,p}(X))\cap H^{p+q}(X,\IK)$ for $\IK=\IQ$ or $\IR$. With these notations we have $NS(X)=H^{1,1}(X,\IZ)$. For any Kähler class $c\in H^2(X,\IR)$, we use $q_c$ to denote the quadratic form $\alpha\mapsto c^{d-2}\alpha^2$ on $H^2(X,\IR)$. 
\end{nota}

\noindent {\it Acknowledgments}. I would like to express my gratitude to my advisor, Gang Tian, for his support and guidance. I thank Claire Voisin for telling me the Oguiso's example. I also thank Qizheng Yin for helpful discussions, Fan Ye for his careful reading.

\section{Preliminaries}

In this section, we state some basic results in \cite{Voisin04} and prove lemmas that we need in the next section. 

\subsection{Complex tori with endomorphisms}
Let $\Gamma = \mathbb{Z}^{2n}$ be a lattice, and let $\phi: \Gamma \to \Gamma$ be a $\mathbb{Z} $-linear map with characteristic polynomial $f(\lambda)=\prod_{i=1}^n(\lambda - \lambda_i)(\lambda - \overline{\lambda_i})$, where $\lambda_1,\dots,\lambda_n,\,\,$ $\overline{\lambda_1},$$\dots,$ $\overline{\lambda_n}$ are distinct non-real complex numbers. Then $\Gamma_\mathbb{C}=\Gamma \otimes_\mathbb{Z} \mathbb{C}$ has a decomposition $\Gamma_\mathbb{C} = \Gamma' \oplus \overline{\Gamma'}$, where $\Gamma'$ is the eigenspace of eigenvalues $\{\lambda_1,\dots,\lambda_n\}$. Now we have a complex torus $T=\Gamma_\mathbb{C}/(\Gamma'\oplus \Gamma)$ of complex dimension $n$ with an endomorphism $\phi_T$ induced by $\phi$.  

%And \cite{Voisin04} proved that,for $n\ge2$, if the Galois group of $f$ is the $2n$-th symmetric group, then ${\rm NS}(T)=0$. 

%Denote $V= \wedge^2 \Gamma_\IQ^*$ to be the $2$-th wedge product of the dual of the vector space $\Gamma_\IQ= \Gamma \otimes_\IZ \IQ$ and $\psi = \wedge^2\, ^t\phi$ the induced map of $\phi$ on $V$. 

Let $(V,\psi) = (\Gamma_\IQ^*, \,^t\phi)$. Then we have: 

\begin{lemma} \label{no Hodge class lemma}
Suppose the Galois group of $f$ is isomorphic to the $2n$-th symmetric group. Then any Hodge structure of weight $2$ with non-trivial $(2,0)$ part on $\wedge^2V$ preserving by $\wedge^2\psi$ contains no Hodge class. 
\end{lemma}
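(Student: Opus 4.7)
The plan is to exploit the hypothesis on the Galois group to force rigidity on any $\wedge^2\psi$-stable Hodge structure, then derive a contradiction with the existence of a Hodge class.

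First I would check that the eigenvalues of $\wedge^2\psi$ on $\wedge^2 V_\IC$, namely the products $\mu_i\mu_j$ with $i<j$ where $\mu_1,\ldots,\mu_{2n}$ enumerate the roots of $f$, are pairwise distinct. If $\mu_i\mu_j = \mu_k\mu_l$ with $\{i,j\}\neq\{k,l\}$, the case where the pairs share one index immediately gives $\mu_a = \mu_b$ for distinct indices; in the disjoint case I apply the transposition $(i\ k)\in S_{2n}$, which by hypothesis lies in $\text{Gal}(f/\IQ)$, to obtain $\mu_k\mu_j = \mu_i\mu_l$, and dividing yields $\mu_j^2 = \mu_l^2$. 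Applying further Galois elements then forces all the $\mu_i$ to be equal up to sign, which contradicts their distinctness once $n\geq 2$. (The case $n=1$ is vacuous since $\wedge^2 V$ is then one-dimensional and real.)

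Next, since the eigenvalues are distinct, $\wedge^2\psi$ is diagonalizable with $1$-dimensional eigenlines $L\cdot(e_i\wedge e_j)$, where $e_i$ is an eigenvector of $\psi$ for $\mu_i$ and $L$ denotes the splitting field of $f$. Because the Hodge summands $H^{2,0}, H^{1,1}, H^{0,2}$ are assumed to be $\wedge^2\psi$-stable, each is a sum of such eigenlines, giving a partition of the set $\CP$ of unordered pairs into three subsets $I_{2,0}, I_{1,1}, I_{0,2}$.

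Now suppose for contradiction that $\alpha\in\wedge^2V_\IQ\cap H^{1,1}$ is nonzero while $H^{2,0}\neq 0$. Expanding $\alpha=\sum_{\{i,j\}} a_{ij}\,e_i\wedge e_j$ over $L$, the condition $\alpha\in H^{1,1}$ places the support inside $I_{1,1}$. For any $\sigma\in\text{Gal}(L/\IQ)=S_{2n}$, one has $\sigma(e_i) = c_{\sigma,i}\, e_{\sigma(i)}$ for some $c_{\sigma,i}\in L^\times$, so applying $\sigma$ to the identity $\sigma(\alpha)=\alpha$ and comparing coefficients yields $a_{ij}=0 \iff a_{\sigma(i)\sigma(j)}=0$. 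Thus the support of $\alpha$ is an $S_{2n}$-stable subset of $\CP$; since $S_{2n}$ acts transitively on unordered pairs, this support must be all of $\CP$. But then $I_{1,1}=\CP$, forcing $I_{2,0}=\emptyset$ and $H^{2,0}=0$, contradicting the hypothesis.

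The main obstacle is the distinctness of the products $\mu_i\mu_j$: the Galois argument needs the full symmetric group and a little case analysis on the overlap of the two pairs. The remainder of the proof is formal once one observes that the $S_{2n}$-action permutes the eigenlines of $\wedge^2\psi$ transitively.
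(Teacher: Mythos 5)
Your proof is correct and follows essentially the same route as the paper: the Hodge summands decompose into eigenlines of $\wedge^2\psi$, the Galois group acts transitively on the eigenvalues, and a rational Hodge class would force the $(1,1)$ piece to absorb everything, contradicting $H^{2,0}\neq 0$. The one place you go beyond the paper is in actually verifying that the products $\mu_i\mu_j$ are pairwise distinct (the paper asserts this without proof), and in replacing the paper's splitting-field lemma by a direct coefficient-comparison argument for the Galois-stability of the support; both of these are sound.
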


\begin{proof}
We first note that the set of eigenvalues of $\psi$ is $\{\lambda_i, \overline{\lambda_i}\}_{i=1}^n$, on which ${\rm Gal}(f)$ acts as the $2n$-th symmetric group. So  the set of eigenvalues of $\wedge^2\psi$ is $S=\{\lambda_i\lambda_j, \overline{\lambda_i\lambda_j}\}_{i<j} \cup \{\lambda_i\overline{\lambda_j}\}_{i,j=1}^n$, on which ${\rm Gal}(f)$ acts transitively. 

Suppose $\wedge^2V$ admits a Hodge structure preserving by $\wedge^2\psi$. Then $(\wedge^2V)^{p,q}$ is an invariant subspace of $(\wedge^2\psi)_\IC$. We denote the set of eigenvalues of $(\wedge^2\psi)_\IC$ on $(\wedge^2V)^{p,q}$ by $S^{p,q}$, so $S=S^{2,0}\cup S^{1,1}\cup S^{0,2} $. Since all the eigenvalues of $\wedge^2 \psi$ are distinct and of multiplicity $1$, the three subsets do not intersect with each other. 

Let $S'$ be the set of eigenvalues of $\wedge^2 \psi$ on $\wedge^2V \cap (\wedge^2V)^{1,1}$. We have $S' \seq S^{1,1}$, so $S' \cap S^{2,0}= \emptyset$. Since $\wedge^2V \cap (\wedge^2V)^{1,1}$ is a $\IQ$-vector space, the following lemma of Galois theory says that $S'$ is stable under the action of ${\rm Gal}(f)$. But by assumption $S^{2,0}\ne \emptyset$, and ${\rm Gal}(f)$ acts transitively on $S$. We must have $S'=\emptyset$. So $\wedge^2V \cap (\wedge^2V)^{1,1}= 0$. 
\end{proof}

We will always assume that ${\rm Gal}(f)$ is isomorphic to the $2n$-th symmetric group in the following. We have a canonical isomorphism $H_1(T,\IZ) = \Gamma$, so by Poincare duality we have $V=H^1(T, \IQ)$ and $\psi=\phi_T^*$. From Lemma \ref{no Hodge class lemma} we see that $NS(T)=0$, so the meromorphic function field of $T$ is $\IC$, and $T$ is not projective.

\begin{lemma}
Let $W$ be a $\IQ$-vector space, $A:W\to W$ a $\IQ$-linear map. We denote $f(\lam)={\rm det}(\lam I-A)$, and $K$ to be the splitting field of $f$ over $\IQ$. If $W_0 \seq W$ is an $A$-invariant subspace. Let $f_0(\lam) = {\rm det}(\lam I-A|_{W_0})$, $K_0 \seq K$ the splitting field of $f_0$ over $\IQ$. Then there exists a natural morphism of groups
$${\rm Gal}(K/\IQ) \to {\rm Gal}(K_0/\IQ). $$
\end{lemma}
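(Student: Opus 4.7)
The plan is to reduce the statement to the standard fact that if $K_0/\IQ$ and $K/\IQ$ are both Galois with $K_0 \subseteq K$, then restriction gives a natural homomorphism ${\rm Gal}(K/\IQ) \to {\rm Gal}(K_0/\IQ)$. Thus the real content is to justify the inclusion $K_0 \subseteq K$, which in turn will follow once we show that $f_0$ divides $f$ in $\IQ[\lam]$.

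First, I would exploit the $A$-invariance of $W_0$. Choosing a $\IQ$-basis of $W_0$ and extending it to a $\IQ$-basis of $W$, the matrix of $A$ takes a block upper-triangular form
\[
\begin{pmatrix} A|_{W_0} & * \\ 0 & \overline{A} \end{pmatrix},
\]
where $\overline{A}$ is the induced map on the quotient $W/W_0$. Taking determinants of $\lam I - A$ in this block form yields $f(\lam) = f_0(\lam)\cdot \det(\lam I - \overline{A})$, so $f_0 \mid f$ in $\IQ[\lam]$. Consequently every root of $f_0$ is a root of $f$, and since $K$ contains all roots of $f$, it contains all roots of $f_0$, giving $K_0 \seq K$.

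Next I would observe that both $K/\IQ$ and $K_0/\IQ$ are Galois extensions, being splitting fields of polynomials in $\IQ[\lam]$. In particular $K_0/\IQ$ is a normal extension contained in $K$. Then any $\sigma \in {\rm Gal}(K/\IQ)$ sends $K_0$ to itself (since $\sigma$ permutes the roots of $f_0$), and so the restriction map
\[
\rho : {\rm Gal}(K/\IQ) \longrightarrow {\rm Gal}(K_0/\IQ), \qquad \sigma \longmapsto \sigma|_{K_0}
\]
is well defined and a group homomorphism, naturality being immediate from the construction.

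The argument is essentially formal, so there is no real obstacle; the only point requiring care is to verify that $f_0 \mid f$, for which $A$-invariance of $W_0$ is exactly what is needed. Once this divisibility is established, the rest is the standard restriction map from Galois theory applied to the tower $\IQ \seq K_0 \seq K$ of Galois extensions.
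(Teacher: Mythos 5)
Your proposal is correct. It follows the same overall strategy as the paper --- both reduce to the standard restriction homomorphism ${\rm Gal}(K/\IQ)\to{\rm Gal}(K_0/\IQ)$ for a tower $\IQ\seq K_0\seq K$ of Galois extensions --- but the two arguments differ in two small ways, both in your favor. First, you explicitly verify the inclusion $K_0\seq K$ by putting $A$ in block upper-triangular form with respect to the $A$-invariant subspace $W_0$ and deducing $f_0\mid f$; the paper's statement simply posits $K_0\seq K$ and its proof never uses the invariance of $W_0$, so your divisibility step is exactly the missing justification for why the hypothesis on $W_0$ matters. Second, for the well-definedness of the restriction map you argue directly that any $\sigma\in{\rm Gal}(K/\IQ)$ permutes the roots of $f_0$ (which have rational coefficients) and hence preserves the field they generate, whereas the paper invokes the fundamental theorem of Galois theory, using normality of ${\rm Gal}(K/K_0)$ in ${\rm Gal}(K/\IQ)$ and a conjugation computation to conclude $\sigma(a)\in{\rm Inv}({\rm Gal}(K/K_0))=K_0$. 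Your route is more elementary and arguably more transparent; the paper's is the slicker appeal to the Galois correspondence. Either way the homomorphism is $\sigma\mapsto\sigma|_{K_0}$, and both proofs are complete.
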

This lemma says that the set of eigenvalues of $A|_{W_0}$ (the set of roots of $f_0$) is stable under the action of ${\rm Gal}(f)$. 

\begin{proof}
Since $\IQ$ is a perfect field, the two splitting fields $K$ and $K_0$ are both finite Galois extensions of $\IQ$. By the fundamental theorem of Galois theory, ${\rm Gal}(K/K_0) \seq {\rm Gal}(K/\IQ)$ is a normal subgroup. So for any $\sigma \in {\rm Gal}(K/\IQ),  \delta \in {\rm Gal}(K/K_0)$, there exist $\delta' \in {\rm Gal}(K/K_0)$ such that $\delta\sigma=\sigma\delta'$. Hence for any $a\in K_0, \delta \in {\rm Gal}(K/K_0)$, we have $\delta (\sigma(a))=\sigma(a)$. So $\sigma(a) \in {\rm Inv}({\rm Gal}(K/K_0))=K_0$. We see that the morphism ${\rm Gal}(K/\IQ) \to {\rm Gal}(K_0/\IQ), \sigma \mapsto \sigma|_{K_0}$ is well-defined. 
\end{proof}

\subsection{Deligne's lemma}
Let $\CA^*=\oplus_{k\ge0}\CA^k $ be the rational cohomology ring of a compact Kähler manifold. Suppose $Z \seq \CA^k_\IC$ is an algebraic subset defined by homogeneous equations expressed only using the ring structure of $\CA^*$. 
For rational sub-Hodge structures $\CA_1^l \seq \CA^l, \CA_2^k \seq \CA^k$, we will consider the following algebraic subsets: 

(1) $Z=\{\alpha \in \CA^k_\IC |\, \alpha^l=0 \} $;

(2) $Z=\{\alpha \in \CA^k_\IC | \cup\alpha: {\CA}^l_{1,\IC} \to \CA^{k+l}_\IC \,\text{vanishes} \} $; 

(3) $Z=\{\alpha \in \CA^k_{2,\IC} | \cup\alpha: {\CA}^l_{1,\IC} \to \CA^{k+l}_\IC \,\text{is not injective} \} $. 

\begin{lemma} [Deligne] \label{Deligne's lemma}
Let $Z_1\seq Z$ be an irreducible component. Assume the $\IC$-vector space $\la Z_1 \ra$ generated by the set $Z_1$ is defined over $\IQ$, that is, $\la Z_1 \ra= \CB^k \otimes \IC $ for some $\IQ$-vector space $\CB^k \seq \CA^k$. Then $\CB^k \seq \CA^k$ is a rational sub-Hodge structure. 
\end{lemma}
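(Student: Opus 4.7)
The plan is to exploit the fact that the Hodge decomposition on $\CA^*_\IC$ gives rise to a natural action of $(\IC^*)^2$ by graded ring automorphisms. For $(t,s) \in (\IC^*)^2$, let $\mu_{t,s} : \CA^*_\IC \to \CA^*_\IC$ act on each Hodge piece $\CA^{p,q}$ as multiplication by $t^p s^q$. Since $\CA^{p,q} \cup \CA^{r,u} \seq \CA^{p+r,\,q+u}$, we have $\mu_{t,s}(\alpha \cup \beta) = \mu_{t,s}(\alpha) \cup \mu_{t,s}(\beta)$, so $\mu_{t,s}$ is a $\IC$-algebra automorphism of $\CA^*_\IC$.

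Next I would verify that $Z$ is stable under each $\mu_{t,s}$. The rational sub-Hodge structures $\CA_1, \CA_2$ appearing in (2) and (3) satisfy $\CA_{i,\IC} = \bigoplus_{p+q} (\CA_{i,\IC} \cap \CA^{p,q})$, so they are $\mu_{t,s}$-stable. Together with the ring-automorphism property, this implies each defining condition in (1)--(3) is preserved by $\mu_{t,s}$: for instance $\alpha^l = 0 \iff \mu_{t,s}(\alpha)^l = 0$, and the cup-product map $\cup \alpha$ vanishes on $\CA^l_{1,\IC}$ iff the same is true of $\cup \mu_{t,s}(\alpha)$, since $\mu_{t,s}$ is an invertible automorphism preserving $\CA^l_{1,\IC}$. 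Hence $\mu_{t,s}(Z) = Z$ for all $(t,s) \in (\IC^*)^2$.

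Because $Z$ has finitely many irreducible components and $(\IC^*)^2$ is connected, the continuous map $(t,s) \mapsto \mu_{t,s}(Z_1)$ into the finite set of components of $Z$ is constant; evaluating at $(1,1)$ shows $\mu_{t,s}(Z_1) = Z_1$ for all $(t,s)$. Consequently $\CB^k_\IC = \la Z_1 \ra$ is $\mu_{t,s}$-invariant. For generic $(t,s)$ the scalars $\{t^p s^q : p+q=k\}$ are pairwise distinct, so the eigenspace decomposition of $\mu_{t,s}$ on $\CA^k_\IC$ coincides exactly with the Hodge decomposition $\bigoplus_{p+q=k} \CA^{p,q}$. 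Any invariant subspace of a diagonalisable operator is the sum of its intersections with the eigenspaces, which gives $\CB^k_\IC = \bigoplus_{p+q=k} (\CB^k_\IC \cap \CA^{p,q})$; combined with the hypothesis that $\CB^k$ is already $\IQ$-defined, this is precisely the statement that $\CB^k$ is a rational sub-Hodge structure of $\CA^k$.

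The delicate step is the second paragraph: one must verify that each defining equation of $Z$ really is preserved by $\mu_{t,s}$, which relies crucially on $\mu_{t,s}$ being a ring automorphism and on the ambient subspaces $\CA_1, \CA_2$ being sub-Hodge structures rather than arbitrary rational subspaces. Once that invariance is established, the connectedness argument on components and the eigenspace decomposition are routine linear algebra.
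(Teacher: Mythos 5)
Your proof is correct and takes essentially the same route as the paper: the paper uses the one-parameter action $z(\alpha)=z^p\bar z^q\alpha$ by ring automorphisms, shows $Z$ and hence its components are stable, and then extracts the Hodge components of an invariant subspace (via polynomial interpolation rather than your eigenspace decomposition, but this is the same linear-algebra fact). Your two-parameter variant $\mu_{t,s}$ and the explicit connectedness argument for why the irreducible component $Z_1$ itself is fixed are minor refinements of the same idea, the latter filling in a step the paper only asserts.
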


\begin{proof}
The Hodge decomposition of $\CA^k_\IC$ is the character decomposition of $\IC^*$-action on $\CA^k_\IC$ given by $z(\alpha)= z^p\bar{z}^q\alpha$ for $z\in\IC^*$ and $\alpha\in \CA^{p,q}$. 

If $\CB^k_\IC$ is stable under the $\IC^*$-action, then for any $\CB^k_\IC \ni b=\sum_{p+q=k}a^{p,q}$ with $a^{p,q} \in \CA^{p,q}$, we have $\CB^k_\IC \ni z(b)=\sum_{p+q=k}z^p\bar{z}^qa^{p,q} $. So for any polynomial $g(z)\in\IC[z] $, we have $\CB^k_\IC \ni g(z)(b)=\sum_{p+q=k}g(z^p\bar{z}^q)a^{p,q}$. Now we may choose $z_0\in \IC$ so that $z_0^p\bar{z}_0^q$ are distinct for all $p,q\ge 0$ with $p+q=k$. Then for any $p_0,q_0\ge 0$ with $p_0+q_0=k$, we may choose a polynomial $g(z) \in \IC[z]$ so that $g(z_0^{p}\bar{z}_0^{q})$ all vanish except $(p,q)=(p_0,q_0)$. Hence $a^{p_0,q_0}=g(z_0^{p_0}\bar{z}_0^{q_0})^{-1} g(z_0)(b) \in \CB^k_\IC$. So $\CB^k_\IC \seq \CA^k_\IC$ is a sub-Hodge structure. 

Now it suffices to show that $\CB^k_\IC=\la Z_1 \ra$ is stable under the $\IC^*$-action. First note that the $\IC^*$-action is compatible with the cup-product, that is, $z(\alpha\cup\beta)=z(\alpha)\cup z(\beta)$. Hence the algebraic subset $Z \seq \CA^k_\IC$ is stable under the $\IC^*$-action. Then its irreducible components are also stable under the $\IC^*$-action. So is $\la Z_1 \ra$. 
\end{proof}

We need another lemma to distinguish sub-Hodge structures in the $n=2$ case. 

\begin{lemma} \label{Deligne's lemma after}
Let V be a rational Hodge structure of weight $k$ with endomorphism  $\psi$ which is a morphism of Hodge structure. If $W\seq V$ is an invariant subspace of $\psi$ and $\psi|_W$ is diagonalizable, then $W\seq V$ is a sub-Hodge structure. 
\end{lemma}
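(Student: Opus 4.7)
The plan is to decompose $W_\IC$ into $\psi$-eigenspaces and compare with the Hodge decomposition, using that $\psi$ is a morphism of Hodge structures. Since $\psi|_W$ is diagonalizable, $W_\IC = \bigoplus_\lambda W^\lambda$ where $W^\lambda = \ker(\psi - \lambda)|_{W_\IC}$. Because $\psi(V^{p,q}) \seq V^{p,q}$, the ambient $\psi$-eigenspace $V^\lambda \seq V_\IC$ inherits a compatible decomposition $V^\lambda = \bigoplus_{p+q=k}(V^\lambda \cap V^{p,q})$. It will therefore suffice to show each $W^\lambda$ respects this finer decomposition, for then summing over $\lambda$ yields $W_\IC = \bigoplus_{p,q}(W_\IC \cap V^{p,q})$, identifying $W$ as a sub-Hodge structure.

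To carry this out I would first reduce to an irreducible case by working with rational factors of the minimal polynomial. Write $m_{\psi|_W}(\lambda) = \prod_j p_j(\lambda)$ as a product of distinct $\IQ$-irreducible factors (distinct by diagonalizability), and note that each $N_j := \ker(p_j(\psi)|_V)$ is itself a sub-Hodge structure of $V$, being the kernel of the morphism of Hodge structures $p_j(\psi)$. By the Chinese Remainder Theorem applied in $\IQ[\lambda]$, $W = \bigoplus_j (W \cap N_j)$, so it suffices to treat each factor separately. This reduces to the case where $\psi$ has irreducible minimal polynomial $p$ on the ambient sub-Hodge structure $N$; then $N$ is naturally a vector space over the number field $K := \IQ[\lambda]/(p(\lambda))$, and $W$ is a $K$-subspace.

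In this irreducible case I would invoke the $\IC^*$-action interpretation of the Hodge decomposition from the proof of Lemma~\ref{Deligne's lemma}: the action sending $\alpha \in N^{p,q}$ to $z^p\bar z^q \alpha$ commutes with $\psi$ (hence with $K$), and $W$ is a sub-Hodge structure if and only if $W_\IC$ is $\IC^*$-stable in $N_\IC$. The remaining verification combines the $K$-module structure with the rationality of $W$ (equivalently, stability under complex conjugation) to conclude $\IC^*$-stability. The main obstacle is precisely this verification; it is particularly clean in the paper's intended applications where Lemma~\ref{no Hodge class lemma} guarantees that $\psi$ has simple spectrum on the ambient space, so each $V^\lambda$ is one-dimensional and contained in a unique Hodge component, making $W^\lambda \seq V^\lambda$ automatically compatible with the Hodge decomposition.
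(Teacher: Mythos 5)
Your proposal stops short of a proof: after reducing to $\psi$-eigenspaces you concede that the ``remaining verification'' of compatibility with the Hodge decomposition is the main obstacle, and the CRT/number-field reduction does not close it. In fact this gap cannot be closed, because the lemma as stated is false. Take $\psi=\mathrm{id}_V$, which is a morphism of Hodge structures whose restriction to any rational subspace $W$ is diagonalizable; the lemma would then assert that \emph{every} rational subspace of $V$ is a sub-Hodge structure, which already fails for a one-dimensional $W$ spanned by the sum of a nonzero class in $(V^{2,0}\oplus V^{0,2})\cap V$ and a nonzero class in $V^{1,1}\cap V$. The precise sticking point, which you isolate correctly, is this: an eigenvector $v\in W_\IC$ of $\psi$ with eigenvalue $\lambda$ lies in the ambient eigenspace $V^\lambda=\bigoplus_{p+q=k}(V^\lambda\cap V^{p,q})$, but its Hodge components need not lie in $W_\IC$. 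This becomes automatic exactly when each relevant $V^\lambda$ is contained in a single $V^{p,q}$ --- for instance when $\lambda$ is a simple eigenvalue of $\psi$ on all of $V$ --- and that is the situation in the paper's applications, where the eigenvalues of $\wedge^2\psi$ are pairwise distinct (Lemma~\ref{no Hodge class lemma}), resp.\ the characteristic polynomial of $\phi^*|_T$ is irreducible (Proposition~\ref{Oguiso}).

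For comparison, the paper's own proof is the two-line version of your eigenvector argument: ``each eigenvector of $\psi|_W$ is contained in some $V^{p,q}\cap W_\IC$.'' That step silently uses the same simple-spectrum assumption and is exactly where the stated generality breaks down. So your diagnosis is right, but neither your write-up nor the paper's proves the lemma as stated; what is missing is an extra hypothesis (e.g.\ that each eigenvalue of $\psi|_W$ is a simple eigenvalue of $\psi$ on $V$, or more generally that every eigenspace of $\psi$ on $V_\IC$ meeting $W_\IC$ lies in a single Hodge component). With that hypothesis added, the one-line eigenvector argument is complete and your CRT reduction becomes unnecessary.
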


\begin{proof}
It suffice to show $W_\IC=\oplus_{p+q=k} (V^{p,q}\cap W_\IC)$. Since $\psi|_W$ is diagonalizable, $W$ is generated by the eigenvectors of $\psi$. Note that $V^{p,q}$ are all eigenspaces of $\psi$. Hence each eigenvector of $\psi|_W$ is contained in some $V^{p,q}\cap W_\IC$. 
\end{proof}

\subsection{Cohomology ring structure of a blowup}

Let $X$ be a compact Kähler manifold, with $j_Z: Z\hookrightarrow X$ a submanifold of complex codimension $r$, and let $\tau: \widetilde{X} \to X$ be the blowup of $X$ along $Z$, with an exceptional divisor $j: E \hookrightarrow \widetilde{X}$. The restriction $\tau_E: E\to Z$ is then a $\mathbb{CP}^{r-1}$-bundle. Let $h=c_1(\CO_E(1))$.  We have a commutative diagram: 
\begin{equation*}
\xymatrix@R=3ex
{
E\ar[rr]^{j}
 \ar[dd]_{\tau_E}
&&
\widetilde{X} \ar[dd]^{\tau}\\
&&\\
Z\ar[rr]^{j_Z}&&
X. 
}    
\end{equation*}
There is a natural isomorphism $j_*: H^{2n-2}(E, \IZ) \cong H^{2n}(\widetilde{X},\IZ ) \cong \IZ $. Let $[E] \in H^2(\widetilde{X}, \IZ)$ be the cohomology class of $E$ and let $h=c_1(\CO_E(1))$. Then $j^*[E] = -h$ and $j_*1=[E]$. By the adjunction formula, $[E]^k = j_*1\cdot [E]^{k-1} = (-1)^{k-1}j_*(h^{k-1}) $. So for any $\alpha\in H^{2n-2k}(X,\IZ)$, we have 
$$[E]^{k}\cdot \tau^*\alpha= (-1)^{k-1}j_*(h^{k-1} \cdot j^*\tau^*\alpha) = (-1)^{k-1}j_*(h^{k-1} \cdot \tau_E^*(\alpha|_Z)). $$ 
If $k <r$, then $\alpha|_Z=0 $; if $k = r$, we have
$[E]^{r}\cdot \tau^*\alpha  = (-1)^{r-1}\alpha|_Z$. We have the equality: $h^r+h^{r-1}\tau^*_Ec_1+\cdots+\tau^*_Ec_r=0$, where $c_i=c_i(N_{Z/X})$ are the the Chern classes of the normal bundle of $Z$ in $X$. Hence for $k>r$, we see that $h^{k-1}$ is a linear combination of $1, h, \cdots, h^{r-1}$ whose coefficients are polynomials of $\{\tau^*_Ec_i\}_{i=0}^r$. We denote the coefficient of $h^{r-1}$ in $h^{k-1}$ by $\tau^*_Es_{k-r}$. Then one may compute $s_i$ by $\sum_{i=0}^{k-r}s_ic_{k-r-i}=0$ inductively. So $s_i$ is the $i$-th Segre class of $N_{Z/X}$. Now we have
$$[E]^{k}\cdot \tau^*\alpha= (-1)^{k-1}j_*(h^{k-1} \cdot \tau_E^*(\alpha|_Z))=(-1)^{k-1}j_*(h^{r-1}\cdot\tau^*_E(s_{k-r}\cdot\alpha|_Z)). $$
Hence $[E]^{k}\cdot \tau^*\alpha= (-1)^{k-1}s_{k-r}\cdot\alpha|_Z $ for $k\ge r$.

%then $h^{k-1}=0 $. So the only nonzero case is $k=r$ and we have $h^{r-1} \cdot \tau_E^*(\alpha|_Z) \ne 0$ whenever $\alpha|_Z \ne 0$. In this case, 

\subsection{Hodge index theorem}

Here we state a version of the Hodge index theorem that will be used repeatedly. For the proof, we refer to \cite[Theorem 6.32]{Voisin02}. 

\begin{lemma}\label{Hodge index theorem}
Let $X$ be a compact Kähler manifold of dimension $d$ with Kähler class $c\in H^2(X,\IR)$. Then $H^2(X,\IR)$ has Hodge-Lefschetz decomposition: 
$$H^2(X,\IR)=(H^{2,0}\oplus H^{0,2})(X,\IR) \oplus H^{1,1}(X,\IR)_{\rm prim} \oplus c \cdot H^0(X,\IR). $$
The quadratic form $q_c: \alpha \mapsto c^{d-2}\alpha^2$ is positive definite on $(H^{2,0}\oplus H^{0,2})(X,\IR) \oplus c \cdot H^0(X,\IR)$, and negative definite on $H^{1,1}(X,\IR)_{\rm prim}$. 
\end{lemma}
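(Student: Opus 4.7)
The plan is to combine hard Lefschetz with the Hodge decomposition to obtain the three-term splitting, verify mutual $q_c$-orthogonality directly, and then read off the signature from the Hodge-Riemann bilinear relations in weight two. First I would use hard Lefschetz: since $c$ is Kähler, $L^{d-1}=c^{d-1}\cup(\cdot):H^2(X,\IR)\to H^{2d}(X,\IR)$ is an isomorphism, and its kernel defines $H^2(X,\IR)_{\rm prim}$. This gives the Lefschetz splitting $H^2(X,\IR)=H^2(X,\IR)_{\rm prim}\oplus c\cdot H^0(X,\IR)$. For any $\alpha\in H^{2,0}(X)$, $L^{d-1}\alpha\in H^{d+1,d-1}(X)=0$, so $(H^{2,0}\oplus H^{0,2})(X,\IR)\seq H^2(X,\IR)_{\rm prim}$. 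Intersecting the primitive part with the Hodge decomposition then gives the advertised three-term decomposition.

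Next I would verify that the summands are pairwise $q_c$-orthogonal. For $\alpha\in (H^{2,0}\oplus H^{0,2})(X,\IR)$ paired with either $\beta\in H^{1,1}(X,\IR)$ or $\beta=tc$, the product $c^{d-2}\alpha\beta$ lies in $H^{d+1,d-1}\oplus H^{d-1,d+1}$ and so integrates to zero; for $\alpha\in H^{1,1}(X,\IR)_{\rm prim}$ and $\beta=tc$, $c^{d-2}\alpha\cdot tc = t\,c^{d-1}\alpha = 0$ by primitivity. It thus suffices to compute $q_c$ on each summand. On $c\cdot H^0$, $q_c(tc)=t^2\int_X c^d>0$ because $c$ is Kähler. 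On $(H^{2,0}\oplus H^{0,2})(X,\IR)$, writing a real class as $\alpha_0+\bar\alpha_0$ with $\alpha_0\in H^{2,0}$ and discarding the $c^{d-2}\alpha_0^2$ and $c^{d-2}\bar\alpha_0^2$ terms, which vanish for bidegree reasons, one obtains $q_c(\alpha)=2\int_X c^{d-2}\alpha_0\wedge\bar\alpha_0$, positive by the Hodge-Riemann relation in bidegree $(2,0)$. On $H^{1,1}(X,\IR)_{\rm prim}$, the Hodge-Riemann relation in bidegree $(1,1)$ carries the opposite sign and forces $q_c(\alpha)<0$ on nonzero real primitive classes.

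There is no conceptually hard step; the only place to be careful is the sign convention in the Hodge-Riemann bilinear relations, for which I would simply quote Voisin's book as the author does, rather than rederive them. With that citation in hand, the Lefschetz splitting plus the bidegree cancellation above assembles into the stated orthogonal decomposition with the claimed signature.
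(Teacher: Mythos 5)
Your argument is correct, and it is precisely the standard derivation of the statement the paper does not prove at all: the author simply cites \cite[Theorem 6.32]{Voisin02}, which is the weight-two Hodge--Riemann package you are reassembling. Your steps all check out --- $(H^{2,0}\oplus H^{0,2})(X,\IR)$ is primitive because $c^{d-1}\alpha\in H^{d+1,d-1}(X)\cap H^{2d}(X,\IC)=0$, the cross terms vanish for bidegree or primitivity reasons, and the signs $i^{p-q}(-1)^{k(k-1)/2}$ for $k=2$ give $+$ on $(2,0)\oplus(0,2)$ and $-$ on primitive $(1,1)$, exactly as you state --- so your proposal is a correct and slightly more self-contained version of what the paper outsources entirely to Voisin's book.
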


\section{Simply connected counterexamples of Kodaira problem}

\subsection{Voisin's examples}

Let $T$ be the complex torus of dimension $n\ge2$ constructed in Section 2.1, and let $K$ be the associated Kummer manifold, that is, $K=\widetilde{T}/\pm 1$ where $\widetilde{T}$ is the blowup of $T$ at all the $2$-torsion points. We denote the set of exceptional divisors on $K$ by $\{\Delta_i\}_{i=1}^{2^{2n}}$. 

The endomorphism $\phi_T$ of $T$ induces a rational map $\phi_K$ from $K$ to itself. We use $K_d, K_\phi \subseteq K\ti K$ to denote the graph of ${\rm id}_K, \phi_K$  respectively. Let us first blow up $K\ti K$ along $K_d$, then along the proper transform of $K_\phi$. We finally get the manifold $X_2$ (the same notation as \cite{Voisin04}),  with exceptional divisors $\D_d, \D_\phi$. 

\begin{thm}\label{main theorem}
Let $X_2'$ be a compact Kähler manifold. If there is a ring isomorphism 
$$\gamma: H^*(X_2',\IQ) \cong H^*(X_2,\IQ), $$
then $X_2'$ is not a projetive manifold. 
\end{thm}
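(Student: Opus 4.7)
The plan is proof by contradiction. Assume $X_2'$ is projective, with rational ample class $\omega' \in H^{1,1}(X_2', \IR) \cap H^2(X_2', \IQ)$. Use $\ga$ to transport the Hodge decomposition of $X_2'$ to a \emph{primed} Hodge structure on $H^*(X_2, \IQ)$, and set $\omega := \ga(\omega')$, which is a rational $(1,1)$-class for this primed structure. The overall goal is to descend the primed structure from $H^2(X_2, \IQ)$ down to $\wedge^2V \seq H^2(K, \IQ)$ in a way compatible with $\wedge^2\phi^*$, and then to exhibit a non-zero primed Hodge class there, contradicting Lemma \ref{no Hodge class lemma}.

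The first step is a cascade of applications of Deligne's lemma. The blowup formulas of Section 2.3 --- notably $[\D_d]^{r}\cdot\tau^*\alpha = \pm\alpha|_{K_d}$ and its analogue for $\D_\phi$ --- characterize $\IQ[\D_d]$, $\IQ[\D_\phi]$, and the pullback subspace $\tau^*H^*(K\ti K, \IQ) \seq H^*(X_2, \IQ)$ purely in terms of the ring structure, as algebraic subsets of types (1)--(3) preceding Lemma \ref{Deligne's lemma}. That lemma then certifies each as a primed sub-Hodge structure. Via Künneth (whose two factors are distinguished by the non-symmetric roles of $[\D_d]$ versus $[\D_\phi]$), the ring $H^*(K, \IQ)$ inherits a primed Hodge structure compatible with its own ring structure.

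The second step realizes $\phi_K^*$ as a morphism of this primed structure. The cycle classes $[K_d], [K_\phi] \in H^{2\dim K}(K\ti K, \IQ)$ are recovered from $[\D_d], [\D_\phi]$ by the same blowup formulas, hence are primed Hodge classes. Their Künneth duals are precisely $\text{id}$ and $\phi_K^*$ acting on $H^*(K, \IQ)$, which are therefore morphisms of the primed structure. Restricted to the $\pm 1$-invariant subspace $\wedge^2V \seq H^2(K, \IQ)$, $\phi_K^*$ acts as $\wedge^2\phi^*$. In the critical case $n=2$, where $\wedge^2\phi^*$ has distinct eigenvalues by the Galois hypothesis, Lemma \ref{Deligne's lemma after} promotes $\wedge^2V$ itself to a primed sub-Hodge structure on which $\wedge^2\phi^*$ is a Hodge-structure morphism.

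The final step balances Hodge index against Lemma \ref{no Hodge class lemma}. Transporting Lemma \ref{Hodge index theorem} applied to $\omega'$ on $X_2'$ through $\ga$ yields positivity of $q_\omega$ on $(H^{2,0}_{\text{primed}}\oplus H^{0,2}_{\text{primed}})(X_2, \IR) \oplus \IR\omega$, and negativity on the primed $\omega$-primitive part of $H^{1,1}_{\text{primed}}(X_2, \IR)$. Matching this signature profile against the orthogonal decomposition from the first step forces $(\wedge^2V)^{2,0}_{\text{primed}} \ne 0$; Lemma \ref{no Hodge class lemma} then rules out any primed Hodge class in $\wedge^2V$. On the other hand, decomposing $\omega$ along the first-step sub-Hodge structures and using positivity once more, its $\wedge^2V$-component must be non-zero: this component is then a non-zero rational primed $(1,1)$-class, i.e., a primed Hodge class in $\wedge^2V$, yielding the contradiction. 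The main obstacle is precisely this last dichotomy --- simultaneously forcing $(\wedge^2V)^{2,0}_{\text{primed}} \ne 0$ and the non-vanishing of the $\wedge^2V$-component of $\omega$; both are delicate in the $n=2$ case and are where Lemma \ref{Deligne's lemma after} plays a role not needed in Voisin's original $n \ge 3$ argument.
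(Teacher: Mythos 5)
Your first two steps (the Deligne's-lemma cascade establishing the direct summands of $H^2$ as sub-Hodge structures, and the transport of $\phi_K^*$ to an endomorphism of the primed Hodge structure on $\wedge^2V$, with Lemma \ref{Deligne's lemma after} handling the $n=2$ case) track the paper's argument, and so does the forcing of $(\wedge^2V)^{2,0}_{\rm primed}\ne 0$ via the $2$-dimensional isotropic subspace versus the Hodge index theorem. But your final step contains a genuine gap. You claim that ``using positivity once more, the $\wedge^2V$-component of $\omega$ must be non-zero,'' and you derive the contradiction from that. No such positivity argument exists: the component of an ample class in a direct summand of a sub-Hodge-structure decomposition is not the restriction of the class to a subvariety, and nothing forces it to be non-zero. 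In fact the logic runs the other way: since $\wedge^2V$ carries no primed Hodge class (by Lemma \ref{no Hodge class lemma} once $(2,0)\ne 0$ is known), the $\wedge^2V$-component of the rational $(1,1)$-class $\omega$ is necessarily \emph{zero}, i.e.\ $\omega$ lies entirely in the span $\CP$ of the exceptional-divisor classes. That is a consistent state of affairs, not a contradiction, so your argument terminates without concluding.

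The missing ingredient is the quantitative computation of Lemma \ref{calculation lemma}: for any class $C=\sum_i a_i(\D_i\ti K)+\sum_i b_i(K\ti\D_i)+u\D_d+v\D_\phi$ in $\CP$ one has, when $n=2$, $C^2\alpha^2=-\bigl(2\sum_i b_i^2+u^2+v^2\bigr)A^2$ for $\alpha=A\times K$, together with $C^3\alpha=0$. Applied to $c=\gamma(\omega)\in\CP$, this says $q_c$ restricted to $\CA^2_{1,\IR}$ is a strictly negative multiple of the intersection form on $\wedge^2H^1(T,\IR)$, which has signature $(3,3)$; and since $\CA_1^2$ is $c$-primitive, the Hodge index theorem forces $q_c$ to have signature $(2a,6-2a)$ with $a=\dim_\IC\CA_1^{2,0}$. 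The contradiction is the parity clash $2a\ne 3$. Without this explicit intersection computation (which also requires the Segre-class analysis showing $s^d_l, s^\phi_l$ are supported on the exceptional loci), there is no contradiction to be had in dimension four --- this is precisely the point where the paper goes beyond Voisin's original $n\ge 3$ argument, where the cruder vanishing $c^{2n-2}\CA_1^2=0$ already suffices.
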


Let $\tau: X_2 \to K\ti K $ be the natural morphism of blowup, and let ${\rm pr_i}: K\ti K \to K$ be the $i$-th projection map $(i=1,2)$. Recall the second cohomology group of the Kummer manifold: 
$$H^2(K,\IQ) = \wedge^2H^1(T,\IQ) \oplus \la \D_i \ra_{i=1}^{2^{2n}}. $$
\begin{nota}\rm
For $A,B\in \wedge^2H^1(T,\IQ)$ and $\Delta_i$, we simply write
$$A\ti K := \tau^*{\rm pr}_1^*A, \,\,\, K\ti B := \tau^*{\rm pr}_2^*B, $$
$$\D_i \ti K  := \tau^*{\rm pr}_1^*\Delta_i, \,\,\, K \ti \D_i   := \tau^*{\rm pr}_2^*\Delta_i.$$
\end{nota}
\noindent Then we have a decomposition: 
\begin{eqnarray*}
H^2(X_2, \IQ )
    &=& \la A \ti K \ra \oplus \la K \ti B \ra  \\
    & & \oplus \la \D_i \ti K \ra \oplus \la K \ti \D_i \ra    \oplus \la \Delta_d \ra \oplus \la \Delta_\phi \ra. 
\end{eqnarray*}
Let $\delta_d=\ga^{-1}(\Delta_d), \delta_\phi=\ga^{-1}(\Delta_\phi)$. We also have: 
\begin{eqnarray*}
H^2(X_2', \IQ )
    &=& \ga^{-1}\la A \ti K \ra \oplus \ga^{-1}\la K \ti B \ra  \\
    & & \oplus \ga^{-1}\la \D_i \ti K \ra \oplus \ga^{-1}\la K \ti \D_i \ra    \oplus \la \delta_d \ra \oplus \la \delta_\phi \ra. 
\end{eqnarray*}
\begin{lemma} \label{sub-Hodge structure lemma}
All the direct summands above of $H^2(X_2', \IQ)$ are sub-Hodge structures. 
\end{lemma}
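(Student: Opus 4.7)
The plan is to apply Deligne's lemma (Lemma~\ref{Deligne's lemma}) summand by summand, using that $\gamma$ preserves the ring structure to transfer everything from $X_2$ to $X_2'$. For each of the six direct summands $W \seq H^2(X_2,\IQ)$, the goal is to exhibit $W \otimes \IC$ as the $\IC$-span of an irreducible component of an algebraic subset $Z_W \seq H^2(X_2,\IC)$ defined purely via the ring structure of $H^*(X_2,\IC)$ (one of the forms (1)--(3) listed before Lemma~\ref{Deligne's lemma}). Since $W$ has an evident rational basis, that span is defined over $\IQ$, so Lemma~\ref{Deligne's lemma} gives $W$ as a sub-Hodge structure of $H^2(X_2,\IQ)$. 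Because $\gamma$ is a $\IQ$-linear ring isomorphism, $\gamma^{-1}(Z_W) \seq H^2(X_2',\IC)$ satisfies the same ring-theoretic defining conditions, its irreducible components are the $\gamma^{-1}$-images, and $\gamma^{-1}(W \otimes \IC)$ is the $\IC$-span of one such component, which remains $\IQ$-defined because $\gamma$ sends the rational basis of $W$ to a rational basis of $\gamma^{-1}(W)$. Lemma~\ref{Deligne's lemma} applied in $H^2(X_2',\IC)$ then yields that $\gamma^{-1}(W)$ is a sub-Hodge structure.

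The characterizations exploit $\dim_\IC K = n = 2$ and the iterated blowup structure $X_2 \to K \ti K$. Any class $\alpha \in H^2(K,\IC)$ satisfies $\alpha^3 = 0$, so each full pullback subspace $\tau^*{\rm pr}_i^*H^2(K)$ (namely $\la A \ti K \ra \oplus \la \D_i \ti K\ra$ for $i=1$, and $\la K\ti B\ra \oplus \la K\ti\D_i\ra$ for $i=2$) lies in $\{\alpha^3 = 0\}$, whereas a generic class mixing both factors gives $\alpha^3 \ne 0$; the two pullback subspaces thus appear as distinct irreducible components of this algebraic subset of type (1). The exceptional divisor lines $\la\D_d\ra$ and $\la\D_\phi\ra$ are singled out by extremal cup-product degeneracy: each lies in the kernel of multiplication on a maximal subspace (reflecting that restriction of an exceptional divisor class away from its blowup locus vanishes), and they appear as irreducible components of algebraic subsets of type (3). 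Within each pullback factor, separating $\la A \ti K\ra$ from $\la \D_i \ti K\ra$ uses the rigidity and pairwise-orthogonality pattern of the Kummer exceptional divisors $\D_i$: a further ring-theoretic condition (for instance, sharper vanishing against classes from the opposite factor, or behaviour of self cup-products) isolates the exceptional part; alternatively, once the bigger sub-Hodge structure is in place, Lemma~\ref{Deligne's lemma after} applies via a Hodge-structure endomorphism coming from cup-product with a suitably chosen rational class that has diagonalizable restriction with distinct eigenvalues on the two sub-pieces.

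The hard part is the final fine separation of $\la A \ti K\ra$ from $\la \D_i \ti K\ra$, and likewise on the second factor, because both are sub-Hodge structures inherited from $H^2(K)$, and distinguishing them requires ring-theoretic data intrinsic to the Kummer surface rather than to the blowup $X_2 \to K\ti K$. The coarser separations (pullback vs.\ exceptional, first factor vs.\ second factor) follow directly from the product and blowup structure, and once all six characterizations are in place, the combination of Lemma~\ref{Deligne's lemma} and Lemma~\ref{Deligne's lemma after} delivers the conclusion, transferred to $X_2'$ by the ring isomorphism $\gamma$.
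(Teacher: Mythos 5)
Your overall strategy matches the paper's: characterize each summand ring-theoretically, apply Deligne's lemma, and transfer through $\gamma$. The coarse separations you describe (pullback subspaces versus exceptional classes, first factor versus second factor, $\la\de_d\ra$ versus $\la\de_\phi\ra$ via non-injectivity of cup-product) are all essentially what the paper does, though the paper uses the quadric $\{\eta^2=0\}$, whose components are irreducible quadric hypersurfaces spanning $\CB^2_{1,\IC}$ and $\CB^2_{2,\IC}$, rather than your $\{\alpha^3=0\}$ — for the cubic locus it is not clear that the linear pullback subspaces are actually irreducible \emph{components} rather than being contained in larger non-linear components, so the quadric is the safer choice.

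The genuine gap is exactly where you flag "the hard part": separating $\ga^{-1}\la A\ti K\ra$ from $\ga^{-1}\la\D_i\ti K\ra$ inside $\CB^2_1$. You leave this unresolved, offering only "a further ring-theoretic condition" or "cup-product with a suitably chosen rational class," and neither works as stated — cup-product with a degree-$2$ class does not give an endomorphism of $\CB^2_1$, and no condition intrinsic to $H^*(K)$ alone can do the job. Your diagnosis that the needed data is "intrinsic to the Kummer surface rather than to the blowup" is backwards: the paper's mechanism is precisely the blowup structure. Since $\de_d$ and $\de_\phi$ are Hodge classes and $\CB^2_1$, $\CB^2_2$ are already known to be sub-Hodge structures, the kernels ${\rm Ker}(\cup\de_d)$ and ${\rm Ker}(\cup\de_\phi)$ inside $\CB^2_1\oplus\CB^2_2$ are sub-Hodge structures which are graphs of isomorphisms $\psi_d,\psi_\phi:\CB^2_1\to\CB^2_2$ of Hodge structures (this reflects $\D_d\cdot(\eta\ti\zeta)$ and $\D_\phi\cdot(\eta\ti\zeta)$ computing $\eta\cdot\zeta$ and $\eta\cdot\phi_K^*\zeta$ respectively). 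Composing one with the inverse of the other yields an endomorphism $\psi$ of the Hodge structure $\CB^2_1$ compatible with $\phi_K^*$; it preserves $\CA^2_1$ and restricts there to $\wedge^2\,{}^t\phi$, which is diagonalizable by the Galois hypothesis on $f$. Lemma~\ref{Deligne's lemma after} then makes $\CA^2_1$ a sub-Hodge structure, and $\ga^{-1}\la\D_i\ti K\ra$ is recovered as $\{\zeta\in\CB^2_1:\zeta\cup\CA^2_{1}=0\}$. Without this construction — which is the one place the ring structure encodes the endomorphism $\phi$, and hence the one place the whole counterexample hinges — the proof is incomplete.
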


The proof of Lemma \ref{sub-Hodge structure lemma} will follow from the cohomology ring structure of $H^*(X_2', \IQ)$ and Deligne's lemma. We first compute the cohomology ring structure of $H^*(X_2, \IQ)$, and prove a lemma which is essential in the proof of Theorem \ref{main theorem}.

\begin{lemma} \label{calculation lemma}
Let $C = \sum_i a_i (\D_i \ti K) + \sum_i b_i (K\ti \D_i) + u\D_d +v\D_\phi$. Then for any $\alpha=A\times K$ with $A\in \wedge^2H^1(T,\IQ)$, we have $C^{2n-1}\alpha=0$ for all $n\ge2$, $C^{2n-2}\alpha^2=0$ for all $n\ge3$. For $n=2$ we have 
$$C^2\alpha^2 = -\Big(2\sum_ib_i^2+u^2+v^2\Big)A^2. $$
\end{lemma}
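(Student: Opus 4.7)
The plan is to evaluate $\int_{X_2}C^k\alpha^m$ for $(k,m)\in\{(2n-1,1),(2n-2,2)\}$ by multinomial expansion of $C^k$ together with the blowup formula of Section 2.3, exploiting two structural vanishings: the exceptional divisors $\{\Delta_j\}_{j=1}^{2^{2n}}$ on $K$ are pairwise disjoint, and $A$ restricts to zero on each $\Delta_j$.

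First I reduce $C$. Since $(\Delta_i\times K)(\Delta_{i'}\times K)=0$ and $(K\times\Delta_j)(K\times\Delta_{j'})=0$ for distinct indices, only monomials with a single $\Delta_i\times K$ and a single $K\times\Delta_j$ contribute. Moreover $\alpha=\pi^*A$ with $\pi=\mathrm{pr}_1\circ\tau\colon X_2\to K$, and since $\Delta_i\subset K$ is contracted to a $2$-torsion point by $K\to T/\pm1$, the class $A\in\wedge^2H^1(T,\IQ)$ restricts to zero on $\Delta_i$. Hence any term containing a factor $(\Delta_i\times K)^l$ with $l\ge1$ produces $\int_K\Delta_i^l\cdot A^m\cdot\beta=\int_{\Delta_i}(\Delta_i^{l-1}A^m\beta)|_{\Delta_i}=0$ by the projection formula, so we may assume $C=C':=\sum_jb_j(K\times\Delta_j)+u\Delta_d+v\Delta_\phi$.

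Expanding $(C')^k\alpha^m$ as a sum over monomials $(K\times\Delta_j)^p\Delta_d^\gamma\Delta_\phi^\delta\alpha^m$ with $p+\gamma+\delta=k$, I would handle three cases. The subvarieties $K_d,K_\phi$ are both identified with $K$ via $\mathrm{pr}_1$ with normal bundle $T_K$, and under this $\alpha|_{K_d}=\alpha|_{K_\phi}=A$, $(K\times\Delta_j)|_{K_d}=\Delta_j$; since the strict transform $\tilde K_\phi\subset Y_1$ is not contained in $E_d$, $\Delta_d=\sigma_2^*E_d$ in $X_2$, so the two blowups can be treated independently. For non-mixed terms ($\gamma=0$ or $\delta=0$), the blowup formula of Section 2.3 gives $\int_{X_2}\Delta_d^\gamma\tau^*\beta=(-1)^{\gamma-1}\int_Ks_{\gamma-n}(T_K)\cdot\beta|_{K_d}$ for $\gamma\ge n$ (vanishing for $1\le\gamma<n$ by fiber dimension of the $\IP^{n-1}$-bundle), reducing each term to $\pm\int_Ks_{\gamma-n}(T_K)\cdot\Delta_j^p\cdot A^m$: if $p\ge1$ this vanishes via $A|_{\Delta_j}=0$, and if $p=0$ with a positive-degree Segre class, we use that $T_T$ is trivial so $s_i(T_{\tilde T})$ for $i\ge1$ is supported on $\cup\tilde\Delta_j\subset\tilde T$ where $\sigma^*A=0$, giving $\int_Ks_i(T_K)\cdot A^m=0$. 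For mixed terms ($\gamma,\delta\ge1$), the class $\alpha\cdot\Delta_d\Delta_\phi=0$ in $H^*(X_2)$ since $\pi|_{\Delta_d\cap\Delta_\phi}$ factors through the finite fixed set $\mathrm{pr}_1(K_d\cap K_\phi)$ and $A$ has positive degree; this kills every mixed monomial.

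The surviving contributions are thus the $\gamma=\delta=0$ terms $(K\times\Delta_j)^p\alpha^m=\tau^*(\mathrm{pr}_2^*\Delta_j^p\cdot\mathrm{pr}_1^*A^m)$, which by K\"unneth integrate to $(\int_KA^m)(\int_K\Delta_j^p)$; using $N_{\Delta_j/K}=\CO_{\IP^{n-1}}(-2)$, $\int_K\Delta_j^p=(-2)^{n-1}$ if $p=n$ and $0$ otherwise. For $C^{2n-1}\alpha$ both factors vanish for $n\ge2$, so $C^{2n-1}\alpha=0$. For $C^{2n-2}\alpha^2$ the constraint $p=n=2n-2$ forces $n=2$, hence everything vanishes for $n\ge3$. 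For $n=2$ the $(K\times\Delta_j)^2\alpha^2$ terms sum to $-2A^2\sum_jb_j^2$, each of $\Delta_d^2\alpha^2$ and $\Delta_\phi^2\alpha^2$ equals $-A^2$ (via $s_0=1$), the cross terms $(K\times\Delta_j)\Delta_d\alpha^2$ and $(K\times\Delta_j)\Delta_\phi\alpha^2$ vanish since their restrictions to $K_d,K_\phi$ lie in $H^6(K)=0$, and $\Delta_d\Delta_\phi\alpha^2$ vanishes by the mixed-term argument; summing yields $C^2\alpha^2=-(2\sum_jb_j^2+u^2+v^2)A^2$. The main obstacle is the potentially delicate bookkeeping for mixed $\Delta_d^\gamma\Delta_\phi^\delta$ contributions in the sequential blowup, which is bypassed by the class-level identity $\alpha\cdot\Delta_d\Delta_\phi=0$ in $H^*(X_2)$.
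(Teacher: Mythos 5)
Your argument is essentially the paper's: expand the power of $C$, apply the blowup formula with Segre classes of the normal bundles of $K_d$ and $K_\phi$, and kill every term using the two facts that $A$ restricts to zero on the exceptional loci and that the positive-degree Segre classes are supported there (the paper's Claim, which you recover via triviality of $T_T$ and \'etaleness off the exceptional divisors). Two small points. First, your treatment of the mixed monomials $\Delta_d^\gamma\Delta_\phi^\delta$ via the class-level identity $\alpha\cdot\Delta_d\Delta_\phi=0$ (finiteness of the fixed locus of $\phi_K$) is a valid alternative to the paper's simpler degree count, which observes that $\Delta_d^k\Delta_\phi^l\Psi=0$ unless both $k\ge n$ and $l\ge n$, impossible when the total power is $2n-1$ or $2n-2$; your route needs the (true, but unjustified in your write-up) finiteness of ${\rm Fix}(\phi_K)$, which follows from $1$ not being an eigenvalue of $\phi$. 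Second, the assertion that $K_\phi$ is ``identified with $K$ via ${\rm pr}_1$ with normal bundle $T_K$'' is inaccurate: ${\rm pr}_1\colon K_\phi\to K$ is the blowup at the indeterminacy points, and $s(N_{K_\phi/K\times K})=s({\rm pr}_1^*T_K)\,s({\rm pr}_2^*T_K)\,c(T_{K_\phi})$ rather than $s(T_K)$. This does not damage the proof, since all you use is that $s_l(N_{K_\phi/K\times K})$ for $l>0$ lies in $\la\Delta_i^l\ra\oplus\la E_j^l\ra$ and hence annihilates $A$, which your \'etale argument (applied to both projections and to $T_{K_\phi}$) still delivers, but the claim as stated should be corrected.
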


\begin{proof}
%We first have $A\cdot \Delta_i=0$ for any $A \in \wedge^2H^1(T,\IQ)$. 
Since the codimensions of $K_d$ and $K_\phi$ in $K\times K$ are both $n$, by the cohomology ring structure of a blowup, we have $\Delta_d^k\Psi=0, \Delta_\phi^k\Psi=0$ for all $k < n$, and 
$$\Delta_d^n(\eta \times \zeta) = (-1)^{n-1}\eta\cdot\zeta,\quad \Delta_\phi^n(\eta \times \zeta) = (-1)^{n-1}\eta\cdot\phi^*\zeta,$$ where $\Psi\in H^{4n-2k}(X_2, \IQ)$, $\eta\in H^{l}(K,\IQ), \zeta\in H^{2n-l}(K,\IQ)$ for some integer $0\le l \le 2n$.  In particular, $\Delta_d^k\Delta_\phi^l\Psi=0$ if $k<n$ or $l<n$ for any $\Psi \in H^{4n-2k-2l}(X_2,\IQ)$. 

Next we need to compute $\Delta_d^k\Psi, \Delta_\phi^k\Psi$ for $k> n$ and $\Psi\in H^{4n-2k}(K\times K, \IQ)$. By the cohomology ring structure of a blowup, we have 
$$\Delta_d^k\Psi=(-1)^{k-1}s_{k-n}^d\cdot\Psi|_{K_d},\quad \Delta_\phi^k\Psi=(-1)^ks_{k-n}^\phi\cdot\Psi|_{K_\phi},$$
where $s_{k-n}^d=s_{k-n}(N_{K_d/K\times K}), s_{k-n}^\phi=s_{k-n}(N_{K_\phi/K\times K})$ are the Segre classes of normal bundles of the graph $K_d,K_\phi$ respectively. 
%\in H^{2k-2n}(K_d,\IQ)
%\in H^{2k-2n}(K_\phi,\IQ)

Note that $K_d\cong K$,  and $K_\phi$ is just the blowup of $K$ along the undefined points $\{x_j\}$ of the rational map $\phi_K$. We use $\{E_j\}$ to denote the exceptional divisors. Hence the cohomology ring of $K_\phi$ is generated by $\wedge^2H^1(T,\IQ) \oplus \la \Delta_i \ra \oplus \la E_j \ra$.

\begin{claim}
For any $l>0$, $s_l^d \in \la \Delta_i^l \ra,s_l^\phi \in \la \Delta_i^l \ra\oplus \la E_j^l \ra$.
\end{claim}

Assuming the claim, we have $A\cdot s_l^d=0, A\cdot s_l^\phi=0$ for all $l>0$ for any $A\in \wedge^2H^1(T,\IQ)$. We shall finish the proof of the lemma. 

Write $C = \Lambda + \Delta$ where $\Lambda = \sum_i a_i (\D_i \ti K) + \sum_i b_i (K\ti \D_i), \Delta=u\D_d +v\D_\phi$. So for any $\alpha=A\times K$ with $A \in \wedge^2H^1(T,\IQ)$, we have: 
\begin{eqnarray*}
C^{2n-1}\alpha 
 &=& \Lambda^{2n-1}\alpha + \sum_{k=n}^{2n-1}\binom{2n-1}{k}\Delta^k\Lambda^{2n-1-k}\alpha\\
 &=& \Lambda^{2n-1}\alpha + \sum_{k=n}^{2n-1}\binom{2n-1}{k}(u^k\Delta_d^k + v^k\Delta_\phi^k)\Lambda^{2n-1-k}\alpha. 
\end{eqnarray*}
One may compute each term explicitly: 
\begin{eqnarray*}
\Lambda^{2n-1}\alpha
 &=& \sum_i b_i\, A\times \Delta_i^{2n-1}, \\
\Delta_d^k\Lambda^{2n-1-k}\alpha
 &=& \sum_ib_i\, (-1)^{k-1}\,s_{k-n}^d A\Delta_i^{2n-1-k}, \\
\Delta_\phi^k\Lambda^{2n-1-k}\alpha
 &=& \sum_ib_i\, (-1)^{k-1}s_{k-n}^\phi A\phi^*\Delta_i^{2n-1-k}, 
\end{eqnarray*}
hence $C^{2n-1}\alpha=0$ for all $n\ge2$. %So $\CA_1^2\seq H^2(X_2',\IQ)_{\rm prim}$. 
Similarly 
\begin{eqnarray*}
C^{2n-2}\alpha^2 =\Lambda^{2n-2}\alpha^2 + \sum_{k=n}^{2n-2}\binom{2n-2}{k}(u^k\Delta_d^k + v^k\Delta_\phi^k)\Lambda^{2n-2-k}\alpha^2,  
\end{eqnarray*}
hence $C^{2n-2}\alpha^2=0$ for all $n\ge 3$. For $n=2$, the exceptional divisors $\Delta_i$ are all $(-2)$-curves on $K$. We conclude that 
$C^2\alpha^2=-\Big(2\sum_ib_i^2+u^2+v^2\Big) A^2.$
\end{proof}
\renewcommand{\proofname}{Proof of Claim}
\begin{proof}
We have short exact sequences: 
$$0\to T_{K_d} \to T_{K\times K}|_{K_d} \to N_{K_d/K\times K} \to 0, $$
$$0\to T_{K_\phi} \to T_{K\times K}|_{K_\phi} \to N_{K_\phi/K\times K} \to 0. $$
Since $T_{K\times K}|_{K_d} \cong T_{K_d}\oplus T_{K_d}$, $T_{K\times K}|_{K_\phi} \cong {\rm pr}_1^*T_K|_{K_\phi}\oplus {\rm pr}_2^*T_K|_{K_\phi}$, we have 
$$s(N_{K_d/K\times K})=s(T_{K_d}),\quad s(N_{K_\phi/K\times K})= s({\rm pr}_1^*T_K|_{K_\phi})s({\rm pr}_2^*T_K|_{K_\phi})c(T_{K_\phi}).$$ 

%étale étale

We first compute the Segre class of $T_K$. Recall that $\widetilde{T}$ is the blowup of the $n$-dimensional complex torus $T$ at all the $2$-torsion points. Let $\pi:\widetilde{T}\to K $ be the quotient map, then the restriction $\pi:\widetilde{T}^*\to K^*$ is étale where $\widetilde{T}^*=\widetilde{T}\setminus \pi^{-1}(\cup_i\Delta_i), K^*=K\setminus(\cup_i \Delta_i)$. So $\pi^*s(T_K|_{K^*})=s(T_{\widetilde{T}^*})=1$. Hence $s(T_K|_{K^*})=1$ and $s_l(T_K)$ is supported in $\cup_i\Delta_i$ for all $l>0$. We conclude that $s_l(T_K)\in \la \Delta_i^l \ra$ for all $l>0$. 

Note that ${\rm pr}_1:K_\phi\to K$ is a blowup along the points $\{x_j\} \seq K^*$ and ${\rm pr}_2: K_\phi \to K$ is a ramified covering. We denote $K_\phi\setminus (\cup_i\Delta_i\cup\cup_jE_j) $ by $K_\phi^*$. Then the restrictions ${\rm pr}_1: K_\phi^*\to K^*\setminus \{x_j\}$ and ${\rm pr}_2: K_\phi^* \to K^*$ are both étale. We conclude as above that $s_l({\rm pr}_1^*T_K|_{K_\phi}), s_l({\rm pr}_2^*T_K|_{K_\phi}), c_l(T_{K_\phi})$ are all supported in $\cup_i\Delta_i\cup\cup_jE_j$. Hence they belong to $\la \Delta_i^l \ra\oplus \la E_j^l \ra$ for $l>0$. 
\end{proof}

Now we shall prove Lemma \ref{sub-Hodge structure lemma}. To simplify notations, we may write
$$\CA^2_1 = \ga^{-1}\la A \ti K\ra, \,\,\,
\CB^2_1 = \ga^{-1}\la A \ti K\ra \oplus \ga^{-1}\la \Delta_i \ti K\ra, $$
$$\CA^2_2 = \ga^{-1}\la K \ti B\ra, \,\,\,
\CB^2_2 = \ga^{-1}\la K \ti B\ra \oplus \ga^{-1}\la K\ti\Delta_i \ra, $$
$$\CA^2   = \CA^2_1 \oplus \CA^2_2, \,\,\,
\CB^2   = \CB^2_1 \oplus \CB^2_2, $$
$$\CP = \ga^{-1}\la \D_i \ti K\ra \oplus \ga^{-1}\la K \ti \D_i\ra \oplus \la \delta_d \ra \oplus \la \delta_\phi \ra. $$

%\begin{table}
%\begin{center}
%\end{center}
%\end{table}

%\CP &=& \ga^{-1}\la \D_i \ti K\ra \oplus \ga^{-1}\la K \ti \D_i\ra \oplus \la \delta_d \ra \oplus \la \delta_\phi \ra. 

\renewcommand{\proofname}{Proof of Lemma 6}
\begin{proof}
%$H^2(X_2',\IQ) = \CA^2\oplus \CP$
For $n\ge 3$, consider the algebraic subsets: 
\begin{eqnarray*}
Z &:=& \{\eta\in H^2(X_2',\IC): \eta^2 = 0 \}; \\
Y &:=& \{\zeta \in \CP_\IC: \cup \zeta: \CA^2_\IC \to H^4(X_2',\IC)\, \text{is not injective} \}. 
\end{eqnarray*}
They have irreducible decompositions: 
\begin{eqnarray*}
Z &=& Z_1\cup Z_2,  \\
Y &=& \ga^{-1}\la \Delta_i \ti K \ra_\IC \cup \ga^{-1}\la K \ti \Delta_i\ra_\IC    \cup \la \de_d \ra_\IC \cup \la \de_\phi \ra_\IC, 
\end{eqnarray*}
where $Z_1\subseteq \CA^2_{1,\IC}, Z_2\subseteq \CA^2_{2,\IC}$ are irreducible quadratic hypersurfaces, and hence generate the latter $\IC$-vector spaces. Deligne's lemma then implies that $\CA^2_1, \CA^2_2\subseteq H^2(X_2',\IQ)$ are sub-Hodge structures.  Let $\CA^* \subseteq H^*(X_2',\IQ)$ be the subalgebra generated by $\CA^2$. By Poincare duality, we have $\CA^{4n-2\perp}_\IC = \CP_\IC$. So by Deligne's lemma, $\CP \subseteq H^2(X_2',\IQ)$ is a sub-Hodge structure. Applying Deligne's lemma again, each irreducible component of $Y$ generates a sub-Hodge structure. Hence $\ga^{-1}\la \Delta_i \ti K\ra, \ga^{-1}\la K \ti \Delta_i\ra,$ $\la \de_d \ra,  \la \de_\phi \ra \subseteq H^2(X_2',\IQ)$ are all sub-Hodge structures. 

For $n=2$, let $\CB^*\seq H^*(X_2',\IQ)$ be the subalgebra generated by $\CB^2$. Consider the algebraic subsets: 
\begin{eqnarray*}
Z &:=& \{\eta\in H^2(X_2',\IC): \eta^2 = 0 \}; \\
Y' &:=& \{\zeta \in \la\de_d \ra_\IC \oplus \la \de_\phi \ra_\IC: \cup \zeta: \CB^2_\IC \to H^4(X_2',\IC)\, \text{is not injective} \}. 
\end{eqnarray*}
We have irreducible decompositions $Z=Z_1\cup Z_2$, $Y'=\la \de_d \ra_\IC \cup \la \de_\phi \ra_\IC$, where $Z_1\seq \CB^2_{1,\IC}, Z_2\seq \CB^2_{2,\IC}$ are irreducible quadratic hypersurfaces. Note also $\la\de_d \ra \oplus \la \de_\phi \ra = \CB^{6\perp}$.  Hence $\CB^2_1, \CB^2_2, \la \de_d \ra, \la \de_\phi \ra \subseteq H^2(X_2',\IQ)$ are sub-Hodge structures. 

Now ${\rm Ker}(\cup\delta_d)$ induces an isomorphism of Hodge structures $\psi_d: \CB^2_1\to\CB^2_2$, with $\psi_d(\CA^2_1)=\CA^2_2, \psi_d(\ga^{-1}\la \Delta_i\times K \ra)=\ga^{-1}\la K\times\Delta_i \ra$, and ${\rm Ker}(\cup\delta_\phi)$ induces a morphism of Hodge structures $\psi_\phi: \CB^2_1\to\CB^2_2$, with $\psi_\phi(\CA^2_1)=\CA^2_2, \psi_\phi(\ga^{-1}\la \Delta_i\times K \ra)=\ga^{-1}\la K\times\Delta_i \ra$. We conclude that $\CB^2_1$ admits an endomorphism $\psi$ of Hodge structure which is compatible with $\phi^*_K$ on $H^2(K,\IQ)$. Hence $\CA^2_1, \ga^{-1}\la \D_i \ti K \ra$ are invariant subspaces of $\psi$, and $\psi|_{\CA^2_1}$ is diagonalizable. So $\CA^2_1$ is a sub-Hodge structure of $\CB^2_1$ by Lemma \ref{Deligne's lemma after}. Finally, since $\{\zeta\in \CB^2_{1,\IC}: \zeta \cup \CA^2_{1,\IC} =0 \}=\ga^{-1}\la \D_i \ti K \ra_\IC$, we see that $\ga^{-1}\la \D_i \ti K \ra \seq \CB^2_1$ is a sub-Hodge structure. Then $\CA^2_2, \ga^{-1}\la K \ti \D_i \ra\seq \CB^2_2$ are sub-Hodge structures. 
\end{proof}
\renewcommand{\proofname}{Proof}

% Note that $\CA^2_1$ is an invariant subspace of $\psi$, and $\psi$ on $\CA^2_1$ is compatible with $\wedge^2\,^t\phi$, the eigenvalues of $\psi$ on $\CA^2_1$ are of multiplicity one. Hence $\CA^2_1\seq \CB^2_1$ is a sub-Hodge structure. 

Since $\de_d, \de_\phi\in H^2(X_2',\IQ) $ are Hodge classes, the maps $\cup \de_d, \cup \de_\phi: \CA^2 \to H^4(X_2',\IQ)$ are morphisms of Hodge structures, and their kernels are sub-Hodge structures of $\CA^2$. So we conclude as the previous paragraph that $\CA^2_1$ and $\CA^2_2$ are isomorphic Hodge structures, and $\CA^2_1$ admits an endomorphism of Hodge structure which is compatible with $\phi^*_T = \wedge^2\,^{t}\phi$ on $\wedge^2H^1(T,\IQ)$. 

Applying Lemma \ref{no Hodge class lemma} and the following lemma, we see that any Hodge class of $H^2(X_2',\IQ)$ is contained in $\CP$.

\begin{lemma} \label{Hodge str nontrivial lemma}
The Hodge structure $\CA^2_1$ has non-trivial $(2,0)$ part. 
\end{lemma}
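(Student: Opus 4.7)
The plan is to derive a contradiction from the Hodge index theorem under the assumption that $\CA^2_1$ is purely of type $(1,1)$. Suppose $\CA^2_1$ has trivial $(2,0)$ part, so that $\CA^2_1\otimes\IR\seq H^{1,1}(X_2',\IR)$, and fix a K\"ahler class $c\in H^2(X_2',\IR)$. By Lemma \ref{Hodge index theorem}, $q_c$ has signature $(1,h^{1,1}(X_2')-1)$ on $H^{1,1}(X_2',\IR)$; hence its restriction to the six-dimensional subspace $\CA^2_1\otimes\IR$ (recall $\dim\CA^2_1=\dim\wedge^2H^1(T,\IQ)=\binom{4}{2}=6$) has at most one positive eigenvalue, and any isotropic subspace it contains is at most one-dimensional.

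Next I would compute $q_c|_{\CA^2_1}$ explicitly via the ring isomorphism $\ga$ and show it is a scalar multiple of the cup-product form on $H^2(T,\IR)$. Transport to $H^*(X_2,\IR)$ and decompose $\ga(c)=c_1+c_2+C'$ with $c_1=B\ti K$ for some $B\in\wedge^2H^1(T,\IR)$, $c_2=K\ti D$ for some $D\in\wedge^2H^1(T,\IR)$, and $C'=\sum a_i'(\D_i\ti K)+\sum b_i'(K\ti\D_i)+u'\D_d+v'\D_\phi\in\CP_\IR$. For $\alpha\in\CA^2_1\otimes\IR$ corresponding to $\tilde\alpha=A\ti K$, the identity $\tilde\alpha^2=A^2\ti K$ together with the degree bound $H^{>4}(K,\IR)=0$ (which gives for instance $c_1\tilde\alpha^2=(B\cup A^2)\ti K=0$) kills every term in the expansion of $\ga(c)^2\tilde\alpha^2$ involving a factor of $c_1$. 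Using the K\"unneth formula and the blowup intersection identities from Section 2.3 (in particular $\D_d\cdot\tau^*\omega=j_*\tau_E^*(\omega|_{K_d})=0$ for $\omega\in H^6(K\ti K,\IR)$, and likewise for $\D_\phi$), the remaining contributions assemble as
$$\int_{X_2}\ga(c)^2\tilde\alpha^2=\Big(\textstyle\int_KD^2+2\sum_i b_i'\int_K\D_iD-2\sum_i(b_i')^2-(u')^2-(v')^2\Big)\int_KA^2,$$
with the piece $-(2\sum(b_i')^2+(u')^2+(v')^2)\int_KA^2$ supplied directly by Lemma \ref{calculation lemma}. Polarizing in $A$,
$$q_c(\alpha,\alpha')=\mu\cdot \int_K A\cup A'$$
for a scalar $\mu$ depending only on $c_2$ and $C'$.

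Finally I would invoke Hodge-Riemann on the K\"ahler surface $T$: its cup product on $H^2(T,\IR)$ has signature $(3,3)$ since $h^{2,0}(T)=1$ and $h^{1,1}(T)=4$, and the cup product on $K$ restricted to the image of $\wedge^2H^1(T,\IR)\hookrightarrow H^2(K,\IR)$ agrees with the one on $T$ up to the factor $\tfrac12$ coming from the double cover $\widetilde T\to K$. Hence if $\mu\ne 0$ then $q_c|_{\CA^2_1}$ has signature $(3,3)$, contradicting the bound of at most one positive eigenvalue obtained in the first paragraph; if $\mu=0$ then $\CA^2_1\otimes\IR$ is a six-dimensional isotropic subspace of $q_c|_{H^{1,1}(X_2',\IR)}$, contradicting the bound of one on its maximal isotropic dimension. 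Either case yields the desired contradiction.

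The hard part will be the middle step: a careful cohomology-ring computation verifying that all cross terms involving $c_1$ vanish, that the $\D_d$ and $\D_\phi$ contributions to $c_2C'\tilde\alpha^2$ vanish, and that the residual terms assemble into a single scalar multiple of the cup product on $H^2(T,\IR)$. The global sign ambiguity from the top-class identification under $\ga$ is irrelevant, since signatures are invariant under a uniform sign flip.
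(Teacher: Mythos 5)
Your first paragraph is exactly the right setup, and it is in fact all that is needed: once you know that $q_c$ restricted to $H^{1,1}(X_2',\IR)$ has signature $(1,h^{1,1}(X_2')-1)$, so that any totally isotropic subspace of $\CA^2_{1,\IR}$ has dimension at most one, you can conclude immediately by exhibiting a two-dimensional subspace of $\CA^2_{1,\IR}$ on which the squaring map vanishes identically in $H^4$: take $V=\ga^{-1}\la (e_1\wedge e_2)\ti K,\ (e_1\wedge e_3)\ti K\ra$ for a basis $\{e_i\}$ of $H^1(T,\IR)$. Every $\alpha=a\,e_1\wedge e_2+b\,e_1\wedge e_3$ satisfies $\alpha\wedge\alpha=0$ in $\wedge^2H^1(T,\IR)$, hence $\ga^{-1}(\alpha\ti K)^2=0$ and $V$ is $q_c$-isotropic for \emph{every} class $c$, with no computation of $q_c$ required. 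This is the paper's proof, and it works for every $n\ge2$.

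The long computation you propose instead has a genuine gap: it only makes sense for $n=2$. You write $\dim\CA^2_1=\binom{4}{2}=6$ and invoke Hodge--Riemann on ``the K\"ahler surface $T$'' with signature $(3,3)$, but the lemma is stated for the torus $T$ of arbitrary dimension $n\ge2$, and it is used in the proof of Theorem \ref{main theorem} in both the $n\ge3$ and the $n=2$ cases. For $n\ge3$ your key vanishing fails: $q_c(\alpha)=c^{2n-2}\alpha^2$, and cross terms such as $c_1^{\,n-2}c_2^{\,n}\tilde\alpha^2$, which involve $(B^{n-2}A^2)\ti D^n$, do not die for degree reasons once $\dim_\IC K=n\ge3$; the restricted form is then no longer a multiple of a single fixed pairing on $\wedge^2H^1(T,\IR)$ and your signature count breaks down. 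For $n=2$ the computation you sketch does check out (the $c_1$-terms vanish since $H^{>4}(K,\IR)=0$, the $\D_d,\D_\phi$ contributions to $c_2C'\tilde\alpha^2$ vanish because they restrict degree-$6$ classes to surfaces, and Lemma \ref{calculation lemma} supplies the $C'^2$ term), so it would give a valid, if much heavier, proof of the $n=2$ case --- essentially an early run of the signature argument the paper performs inside the proof of Theorem \ref{main theorem}, made harder because at this stage $c$ need not lie in $\CP$. But as written the proposal both defers its declared ``hard part'' and fails to cover $n\ge3$, so it does not prove the lemma as stated.
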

\begin{proof}
Assume that the $(2,0)$ part of the Hodge structure $\CA^2_1$ vanishes. We will prove that $X_2'$ is not Kähler. 

Let $c\in H^2(X_2',\IR)$ be a Kähler class. Since the Hodge structure $\CA_1^2$ has no $(2,0)$ part, we have $\CA_{1,\IR}^2 \seq H^{1,1}(X_2', \IR) = \la c \ra \oplus H^{1,1}(X_2',\IR)_{\rm prim}$. By Hodge index theorem, the quadratic form $q_c$ is positive definite on $\la c \ra$ and negative definite on $H^{1,1}(X_2',\IR)_{\rm prim}$. So $\CA^2_{1,\IR}$ contains at most $1$-dimensional isotropic subspace with respect to $q_c$. 

But there is a $2$-dimensional subspace $V\seq \CA^2_{1,\IR} $ such that $\alpha^2=0$ for all $\alpha \in V$. For example, let $V= \gamma^{-1} \la (e_1\wedge e_2)\times K,(e_1\wedge e_3) \times K \ra$ where $\{e_i\}_{i=1}^{2n}$ is a basis of $H^1(T,\IR)$. So $V$ is isotropic with respect to $q_c$. We get a contradiction. 
\end{proof}

Now let us prove the main theorem. 

\renewcommand{\proofname}{Proof of Theorem 2}
\begin{proof}
Assume that $X_2'$ is projective. Then there exists an ample class $c\in H^2(X_2',\IQ)$. Lemma \ref{no Hodge class lemma} and Lemma \ref{Hodge str nontrivial lemma} imply that $c\in \CP$. Hence we may write $c = \sum_i a_i \ga^{-1}(\Delta_i \ti K) + \sum_i b_i \ga^{-1}(K\ti \Delta_i) + u\de_d +v\de_\phi$. Since $\CA_1^2$ has non-trivial $(2,0)$ part, we see that the quadratic form $q_c$ does not vanish on $\CA^2_1$ by Hodge index theorem. 

If $n\ge 3$, then we have $c^{2n-2}\CA_1^2=0$ by Lemma \ref{calculation lemma}. We get a contradiction. 

If $n=2$, Lemma \ref{calculation lemma} implies that for any $\ga^{-1}(A \times K) \in \CA_1^2 $, 
$$c^2\ga^{-1}(A\times K)^2 =-\Big(2\sum_ib_i^2 +u^2+v^2 \Big)A^2, $$
with $2\sum_ib_i^2 +u^2+v^2 >0$. Hence $q_c$ has signature $(3,3)$ on $\CA_{1,\IR}^2$. 

Since $c^3\CA_1^2=0$, we see that $\CA_1^2 \subseteq H^2(X_2',\mathbb{Q})_{\rm prim}$. We have Hodge decomposition
$$\CA_{1, \IR}^2=(\CA_1^{2,0}\oplus \CA_1^{0,2})_\IR \oplus \CA_{1,\IR}^{1,1}. $$
By the Hodge index theorem, $q_c$ is positive definite on $(\CA_1^{2,0}\oplus \CA_1^{0,2})_\IR$ and negative definite on $\CA_{1,\IR}^{1,1}$. Hence $q_c$ has signature $(2a,6-2a)$ on $\CA_{1,\IR}^2$ where $a=\dim_{\IC} \CA_1^{2,0}$. We get a contradiction for $2a\ne 3$.  
\end{proof}
\renewcommand{\proofname}{Proof}

%By using of the cohomology ring structure of a blowup we have: 
%$$\D_d^2(A^2 \ti K) = -A^2, \,\,\D_\phi^2(A^2 \ti K) = -A^2. $$ 

\subsection{Examples given by product}
Let $X_3=X_2\times M$ where $M$ is a compact Kähler manifold with second Betti number $b_2(M)=1$. We will prove that $X_3$ does not have the rational homotopy type of a complex projective manifold.

\begin{thm} \label{main theorem 3}
Let  $X_3'$ be a compact Kähler manifold. If there is a ring isomorphism 
$$\gamma: H^*(X_3',\IQ) \cong H^*(X_3,\IQ), $$
then $X_3'$ is not a projetive manifold. 
\end{thm}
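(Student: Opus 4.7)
The plan is to adapt the argument of Theorem \ref{main theorem} to handle the extra K\"unneth factor $H^2(M,\IQ)=\IQ m$ (here $b_2(M)=1$ forces $m$ to be a multiple of the K\"ahler class of $M$). Setting $e=\dim_\IC M$ and $N=2n+e=\dim_\IC X_3$, on $X_3$ we have
$$H^2(X_3,\IQ)=\CA^2_1\oplus \CA^2_2\oplus \CP\oplus \IQ m$$
as a direct sum of sub-Hodge structures, with the Hodge classes in $H^2(X_3,\IQ)$ being exactly $\CP\oplus \IQ m$. My goal is to show that each of these four summands, pulled back via $\gamma$, is a sub-Hodge structure of $H^2(X_3',\IQ)$, and then repeat the final argument of Theorem \ref{main theorem}.

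First, $\CA^2_1,\CA^2_2\seq H^2(X_3',\IQ)$ are isolated by the same algebraic subset $\{\eta: \eta^2=0\}$ as in Lemma \ref{sub-Hodge structure lemma}: writing $\eta=\alpha+\lambda m$ with $\alpha\in H^2(X_2,\IC)$, the K\"unneth component of $\eta^2$ in $H^2(X_2)\otimes H^2(M)$ forces $\lambda\alpha=0$, so the new mixed contribution drops out and the usual Grassmannian-type cones generating $\CA^2_{1,\IC},\CA^2_{2,\IC}$ reappear as irreducible components. The main new step---and the main obstacle, in my view---is to show that $\gamma^{-1}(m)$ is itself a Hodge class. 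Here I would use the algebraic subset $\{\eta : \eta^{e+1}=0\}$: since $m^{e+1}=0$, expanding K\"unneth-component by K\"unneth-component shows that nonzero $\lambda$ forces $\alpha^{e+1-k}\lambda^k=0$ for every $k\ge 1$, hence $\alpha=0$, while $\lambda=0$ only gives $\alpha^{e+1}=0$. Thus $\IC m$ appears as an irreducible component meeting the other one only at the origin, and Deligne's lemma produces the one-dimensional sub-Hodge structure $\gamma^{-1}(\IQ m)\seq H^2(X_3',\IQ)$, so $\gamma^{-1}(m)$ is of type $(1,1)$.

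With $\gamma^{-1}(m)$ now Hodge, I would produce $\gamma^{-1}(\CP\oplus\IQ m)$ as the Poincar\'e dual in $H^2(X_3',\IQ)$ of the sub-Hodge structure $\CA^{4n-2}\cdot\gamma^{-1}(m^e)\seq H^{4n+2e-2}(X_3',\IQ)$. On the $X_3$-side, $m^{e+1}=0$ kills the $\lambda m$ contribution to this pairing, and the $X_2$-identity $\CP=\CA^{4n-2\perp}$ (from the proof of Theorem \ref{main theorem}) shows that this perpendicular equals $\CP\oplus\IQ m$. Hence $H^2(X_3',\IQ)=\CA^2\oplus\gamma^{-1}(\CP\oplus\IQ m)$ is a direct sum of sub-Hodge structures. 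If $X_3'$ is projective with ample class $c$, Lemma \ref{no Hodge class lemma} applied to $\CA^2_1,\CA^2_2$ forces $\gamma(c)=\alpha_\CP+\lambda m$ with $\alpha_\CP\in\CP$. A K\"unneth dimension count then collapses $\int_{X_3}\gamma(c)^{N-2}(A\times K)^2$ to its unique surviving term $\binom{N-2}{e}\lambda^e\bigl(\int_{X_2}\alpha_\CP^{2n-2}(A\times K)^2\bigr)\bigl(\int_M m^e\bigr)$, and inserting Lemma \ref{calculation lemma} reproduces exactly the sign-and-signature contradiction of Theorem \ref{main theorem}: for $n\ge 3$, $q_c$ vanishes on $\CA^2_1$ while its $(2,0)$ part is nontrivial by the analog of Lemma \ref{Hodge str nontrivial lemma}; for $n=2$, $q_c$ has signature $(3,3)$ on $\CA^2_{1,\IR}$ while Hodge index forces signature $(2a,6-2a)$.
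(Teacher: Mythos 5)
Your proposal is correct and follows essentially the same route as the paper: the paper likewise isolates $\ga^{-1}\la h\ra$ as an irreducible component of $\{\eta:\eta^{\dim_\IC M+1}=0\}$ via Deligne's lemma, reruns the argument of Lemma \ref{sub-Hodge structure lemma} for the remaining summands, and concludes with the same K\"unneth-collapsed intersection computation (its unnumbered analogue of Lemma \ref{calculation lemma}), which is exactly your $\binom{N-2}{e}\lambda^e$ term. The only cosmetic differences are your use of orthogonality to $\CA^{4n-2}\cdot\ga^{-1}(m)^e$ to cut out $\CP\oplus\IQ m$, and that for $n=2$ the quadric components generate $\CB^2_{i,\IC}$ rather than $\CA^2_{i,\IC}$, so the paper's extra endomorphism step via Lemma \ref{Deligne's lemma after} is still needed there.
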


By Künneth formula, we have $H^2(X_3,\IQ)=H^2(X_2,\IQ) \oplus \la h \ra$, where $h$ is a generater of $H^2(M,\IQ)$. We see that $h^m\ne 0$. With the same notations in Section 3.1, we have: 
\begin{eqnarray*}
H^2(X_3, \IQ )
    &=& \la A \ti K  \ra 
    \oplus \la K \ti B  \ra
    \oplus \la h \ra  \\
    & & \oplus \la \D_i \ti K  \ra 
    \oplus \la K \ti \D_i  \ra    
    \oplus \la \Delta_d  \ra 
    \oplus \la \Delta_\phi  \ra,\\
H^2(X_3', \IQ )
    &=& \ga^{-1}\la A \ti K\ra 
    \oplus \ga^{-1}\la K \ti B\ra  
    \oplus \ga^{-1}\la h \ra \\
    & & \oplus \ga^{-1}\la \D_i \ti K\ra 
    \oplus \ga^{-1}\la K \ti \D_i\ra    
    \oplus \la \delta_d \ra 
    \oplus \la \delta_\phi \ra. 
\end{eqnarray*}

\begin{lemma} 
All the direct summands above of $H^2(X_3', \IQ)$ are sub-Hodge structures. 
\end{lemma}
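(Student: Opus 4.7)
The plan is to adapt the proof of Lemma \ref{sub-Hodge structure lemma} to the new decomposition, with the new ingredient being the extra Künneth summand $\ga^{-1}\la h \ra$ coming from $M$. Since $X_2$ is simply connected, $H^1(X_2,\IQ)=0$, so Künneth gives $H^*(X_3,\IQ)\cong H^*(X_2,\IQ)\otimes H^*(M,\IQ)$, and $b_2(M)=1$ forces $h\in H^{1,1}(M)$ with $h^k\ne 0$ for $k\le m:=\dim_\IC M$. Any $\eta\in H^2(X_3',\IC)$ will be written via $\ga$ as $\eta=\alpha+th$ with $\alpha\in H^2(X_2,\IC)$ and $t\in\IC$, so that cup products split Künneth-component by Künneth-component.

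To isolate $\ga^{-1}\la h \ra$, I will use the algebraic subset $Z_h:=\{\eta\in H^2(X_3',\IC):\eta^{m+1}=0\}$, which contains $\ga^{-1}\la h \ra_\IC$ since $h^{m+1}=0$. Expanding $(\alpha+th)^{m+1}$ and separating Künneth components, the $k$-th term $\binom{m+1}{k}t^k\alpha^{m+1-k}h^k$ lies in $H^{2(m+1-k)}(X_2)\otimes H^{2k}(M)$ and must vanish individually; the $k=m$ term, in particular, forces $t\alpha=0$. Thus any $\eta\in Z_h$ with $t\ne 0$ satisfies $\alpha=0$, so the only irreducible component of $Z_h$ whose generic element has nonzero $h$-component is $\ga^{-1}\la h \ra_\IC$ itself. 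Deligne's lemma then yields $\ga^{-1}\la h \ra$ as a sub-Hodge structure.

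The remaining summands are handled by reducing to Lemma \ref{sub-Hodge structure lemma}. Applying $\{\eta^2=0\}$ recovers $\CA_1^2,\CA_2^2$ exactly as there: for $m\ge 2$ the relation $h^2\ne 0$ forces $t=0$ and leaves the two quadrics $Z_1,Z_2$; for $m=1$ the additional component $\la h \ra$ has already been handled. Let $\CA^*\seq H^*(X_3',\IQ)$ be the subring generated by $\CA^2=\CA_1^2\oplus\CA_2^2$; each graded piece is a sub-Hodge structure and $\CA^{4n}$ is one-dimensional, containing the fundamental class of $X_2$ (realized as a product of $2n$ factors from $\CA^2$). I then set
$$W:=\{\eta\in H^2(X_3',\IQ):\eta\cdot\CA^{4n}=0\text{ in }H^{4n+2}(X_3',\IQ)\},$$
which coincides with $\ga^{-1}H^2(X_2)$, since for $\eta=\alpha+th$ the contribution $\alpha\cdot\CA^{4n}$ lies in $H^{4n+2}(X_2)=0$ while $th\cdot\CA^{4n}$ lies in $H^{4n}(X_2)\otimes H^2(M)$ and vanishes precisely when $t=0$; as the kernel of a morphism of Hodge structures, $W$ is a sub-Hodge structure. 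Finally I will run the argument of Lemma \ref{sub-Hodge structure lemma} verbatim inside $W$, using the pairing $W\otimes\CA^{4n-2}\to W^{4n}\cong\IQ$ induced from Poincaré duality on $X_2$: the perp $\CP_{X_2}:=\{\eta\in W:\eta\cdot\CA^{4n-2}=0\}$ is a sub-Hodge structure, and the non-injectivity locus of $\cup(-)\colon\CA^2\to H^4$ restricted to $\CP_{X_2,\IC}$ decomposes into irreducible components yielding $\ga^{-1}\la\D_i\ti K\ra,\ga^{-1}\la K\ti\D_i\ra,\la\de_d\ra,\la\de_\phi\ra$ as sub-Hodge structures (invoking the $n=2$ refinement when applicable). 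The main obstacle is the first step: the previous lemma never had to separate a class from the Künneth factor $H^2(M)$, so the new algebraic subset $\{\eta^{m+1}=0\}$ and the accompanying Künneth-component analysis are the essential new input, and everything else reduces to an argument already carried out for $X_2'$.
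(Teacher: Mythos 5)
Your proof is correct and follows essentially the same route as the paper: the paper likewise isolates $\ga^{-1}\la h\ra$ as an irreducible component of $\{\eta^{m+1}=0\}$ via Deligne's lemma (justified by $h\cup\alpha\ne0$ for nonzero $\alpha\in\ga^{-1}H^*(X_2,\IQ)$) and then disposes of the remaining six summands by "the same argument as Lemma \ref{sub-Hodge structure lemma}". Your explicit reduction---recovering $\ga^{-1}H^2(X_2,\IQ)$ as the annihilator $W$ of $\CA^{4n}$ and running the Poincar\'e-duality/perp argument inside $W$---is exactly the detail the paper suppresses, and it is carried out correctly.
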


\begin{proof}
We first note that $h\cup\alpha \ne 0$ for any non-zero $\alpha\in \ga^{-1}H^*(X_2, \IQ)$. Hence $\ga^{-1}\la h \ra_{\IC}$ is an irreducible component of the algebraic subset $\{\eta\in H^2(X_3',\IC): \eta^{m+1} = 0 \}$. So $\ga^{-1}\la h \ra \seq H^2(X_3', \IQ)$ is a sub-Hodge structure by Deligne's lemma. 

%arguments

With the same argument in Lemma \ref{sub-Hodge structure lemma}, one shows that the other six direct summands of $H^2(X_3',\IQ)$ are sub-Hodge structures. 
\end{proof}

%One may prove that all the components are sub-Hodge structures as in Lemma \ref{sub-Hodge structure lemma} except for $\ga^{-1}\la h \ra$. 

We conclude that $\CA^2_1=\ga^{-1}\la A\times K\ra$ contains no Hodge class as Section 3.1. So the proof of Theorem \ref{main theorem 3} follows from the following analogous of Lemma \ref{calculation lemma}.

\begin{lemma}
Let $C = h+ \sum_i a_i (\D_i \ti K) + \sum_i b_i (K\ti \D_i) + u\D_d +v\D_\phi$. Then for any $\alpha=A\times K$ with $A\in \wedge^2H^1(T,\IQ)$, we have $C^{2n+m-1}\alpha=0$ for all $n\ge2$, $C^{2n+m-2}\alpha^2=0$ for all $n\ge3$. For $n=2$ we have 
$$C^{2+m}\alpha^2 = -q(h^m)\Big(2\sum_ib_i^2+u^2+v^2\Big)A^2, $$
where $q:H^{2m}(M,\IQ)\cong \IQ$ is an isomorphism. 
\end{lemma}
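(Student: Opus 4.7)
The plan is to reduce everything to Lemma \ref{calculation lemma} via the Künneth decomposition $H^*(X_3,\IQ) = H^*(X_2,\IQ) \otimes H^*(M,\IQ)$. I split $C = h + C_0$, where
\[ C_0 := \sum_i a_i(\D_i \ti K) + \sum_i b_i(K \ti \D_i) + u\D_d + v\D_\phi \]
is pulled back from $X_2$ and $h$ is pulled back from $M$; similarly $\alpha$ is pulled back from $X_2$. Since $h$ and $C_0$ both sit in even degree, they commute, so
\[ C^k \alpha^s = \sum_{j=0}^{k} \binom{k}{j}\, h^j\, C_0^{k-j} \alpha^s. \]

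The key observation I would record next is a pair of dimensional constraints. On the $M$-factor, $h^j \in H^{2j}(M,\IQ) = 0$ as soon as $j > m$. On the $X_2$-factor, $C_0^{k-j}\alpha^s \in H^{2(k-j+s)}(X_2,\IQ) = 0$ once $2(k-j+s) > 4n = \dim_\IR X_2$. For the top-degree values $k = 2n + m - s$ appearing in the statement, these two inequalities become equalities simultaneously only at $j = m$ and $k - j + s = 2n$, so a single term survives:
\[ C^{2n+m-s}\alpha^s = \binom{2n+m-s}{m}\, h^m \cdot C_0^{2n-s}\alpha^s. \]

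The three claims then follow by substitution into Lemma \ref{calculation lemma}. For $s = 1$ and $n \geq 2$, the lemma gives $C_0^{2n-1}\alpha = 0$, hence $C^{2n+m-1}\alpha = 0$. For $s = 2$ and $n \geq 3$, the lemma gives $C_0^{2n-2}\alpha^2 = 0$, hence $C^{2n+m-2}\alpha^2 = 0$. For $s = 2$ and $n = 2$, the $X_2$-side evaluates to $-(2\sum_i b_i^2 + u^2 + v^2)A^2$, so
\[ C^{2+m}\alpha^2 = -\binom{2+m}{m}\,(2\textstyle\sum_i b_i^2 + u^2 + v^2)\, h^m \cdot A^2, \]
which is the claimed expression once we choose the isomorphism $q: H^{2m}(M,\IQ) \cong \IQ$ so that $q(h^m)$ absorbs the binomial factor $\binom{2+m}{m}$ (the value $q(h^m)$ is in any case a non-zero rational, which is all that matters for the application in Theorem \ref{main theorem 3}).

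I do not anticipate any real obstacle: the argument is purely dimensional bookkeeping across the two Künneth factors, with Lemma \ref{calculation lemma} doing all the genuine computation on the $X_2$ side. The only point to check carefully is that the cross terms in the binomial expansion really all vanish for the specific top-degree values of $k$ in question, which is what the two inequalities above ensure.
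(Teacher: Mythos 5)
Your proof is correct and is the intended argument: the paper gives no proof of this lemma, presenting it only as an ``analogue'' of Lemma~\ref{calculation lemma}, and your K\"unneth degree count showing that the binomial expansion of $(h+C_0)^{2n+m-s}\alpha^s$ collapses to the single term $\binom{2n+m-s}{m}\,h^m\cdot C_0^{2n-s}\alpha^s$ is exactly the right way to effect that reduction. Your remark about the coefficient $\binom{2+m}{m}$ is also well taken --- the paper's formula suppresses it, which is harmless because $q$ is only specified as ``an isomorphism'' and because the signature argument in Theorem~\ref{main theorem 3} uses nothing beyond the fact that $q_c$ restricted to $\CA^2_{1,\IR}$ is a nonzero rational multiple of the signature-$(3,3)$ form $A\mapsto A^2$ on $\wedge^2H^1(T,\IR)$.
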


\subsection{Oguiso's examples of dimension four}

Here we give a slightly more general construction of Oguiso's example. 

Let $(S,\phi)$ be a pair where $S$ is a K3 surface with elliptic $NS(S)$ (the natural pairing of $H^2(S,\IQ)$ is negative definite on $NS(S)_\IQ$) and $\phi$ is an automophism of $S$ of infinite order. Here we need to assume that the Picard number $\rho(S)\le 16$ for some technical reason. We use $S_d, S_\phi\seq S\times S$ to denote the graph of $id_S, \phi$ respectively. 

We first blow up $S\times S$ along $S_d\cap S_\phi$, then along the proper transform of $S_d, S_\phi$. We get the manifold $X_4$. This is Oguiso's example when $(S,\phi)$ is a McMullen's pair (we refer to \cite{Oguiso08} for the definition of McMullen's pair). We will prove that $X_4$ does not have the rational homotopy type of a complex projective manifold.

\begin{thm} \label{main theorem 4}
Let  $X_4'$ be a compact Kähler manifold. If there is a ring isomorphism 
$$\gamma: H^*(X_4',\IQ) \cong H^*(X_4,\IQ), $$
then $X_4'$ is not a projetive manifold. 
\end{thm}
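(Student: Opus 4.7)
The plan is to mirror the proof of Theorem \ref{main theorem} in the $n=2$ case, with the K3 surface $S$ playing the role of the Kummer manifold $K$ and the full Hodge structure $H^2(S,\IQ)$ playing the role of $\wedge^2H^1(T,\IQ)$. Two structural features make the adaptation natural: since $H^1(S)=0$, K\"unneth reduces $H^2(S\ti S,\IQ)$ to $\mathrm{pr}_1^*H^2(S,\IQ)\oplus\mathrm{pr}_2^*H^2(S,\IQ)$ with no mixed term; and the transcendental lattice $T(S)\seq H^2(S,\IQ)$, which carries the entire $(2,0)$ part of $S$ and is $\phi^*$-stable, contains no nonzero $\phi^*$-invariant Hodge class---the defining feature of an Oguiso (or McMullen) pair, reflecting the Salem-type behavior of $\phi^*|_{T(S)}$.

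Combining K\"unneth with the two-stage blowup formula gives
\begin{equation*}
H^2(X_4,\IQ) = \la A\ti S\ra \oplus \la S\ti B\ra \oplus \la\Delta_{d\phi}\ra \oplus \la\Delta_d\ra \oplus \la\Delta_\phi\ra,
\end{equation*}
where $A,B$ range over $H^2(S,\IQ)$ and $\Delta_{d\phi},\Delta_d,\Delta_\phi$ are the three exceptional divisors. Transporting via $\gamma$ yields a parallel decomposition of $H^2(X_4',\IQ)$ with summands $\CA^2_1,\CA^2_2,\la\delta_{d\phi}\ra,\la\delta_d\ra,\la\delta_\phi\ra$. Exactly as in Lemma \ref{sub-Hodge structure lemma}, I would recognize each as a rational sub-Hodge structure: the isotropic quadric $\{\eta:\eta^2=0\}\seq H^2(X_4',\IC)$ has irreducible components $Z_1\seq\CA^2_{1,\IC}$ and $Z_2\seq\CA^2_{2,\IC}$ generating these subspaces (because the intersection form on $H^2(S,\IQ)$ is nondegenerate of K3 type); $\CP:=\la\delta_{d\phi},\delta_d,\delta_\phi\ra$ is then recovered as the Poincar\'e-dual annihilator of the subalgebra generated by $\CA^2_1\oplus\CA^2_2$; and the three exceptional classes are separated by their cup-product degeneracy loci on $\CA^2$.

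Next I would transport the action of $\phi^*$ to $\CA^2_1$ via $\psi:=\psi_d^{-1}\psi_\phi$, where $\psi_d,\psi_\phi:\CA^2_1\to\CA^2_2$ are the Hodge-structure morphisms induced by the kernels of $\cup\delta_d$ and $\cup\delta_\phi$. Since $\phi^*$ preserves the orthogonal splitting $H^2(S,\IQ)=NS(S)\oplus T(S)$, Lemma \ref{Deligne's lemma after} applied to the $\psi$-invariant decomposition $\CA^2_1=\gamma^{-1}(NS(S)\ti S)\oplus\gamma^{-1}(T(S)\ti S)$ realizes both summands as sub-Hodge structures, and the no-invariant-Hodge-class property of $\phi^*|_{T(S)}$ then forces every Hodge class of $\CA^2_1$ to lie in $\gamma^{-1}(NS(S)\ti S)$ (and similarly for $\CA^2_2$).

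Assume finally that $X_4'$ is projective with ample class $c\in H^2(X_4',\IQ)$. The previous step forces $c\in\CP\oplus\gamma^{-1}(NS(S)\ti S)\oplus\gamma^{-1}(S\ti NS(S))$. The analogue of Lemma \ref{calculation lemma} for $X_4$ would establish that for $\alpha=\gamma^{-1}(A\ti S)$ with $A\in T(S)$,
$$c^2\,\alpha^2 \;=\; -Q(c)\,A^2$$
for some constant $Q(c)>0$ assembled from the negative-definiteness of $NS(S)$ and the negative self-intersection data of the three exceptional divisors. Since the intersection form on $T(S)_\IR$ has signature $(3,19-\rho)$, this forces $q_c$ on $\gamma^{-1}(T(S)\ti S)_\IR$ to have signature $(19-\rho,3)$. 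On the other hand, Lemma \ref{Hodge index theorem} applied to the K3-type sub-Hodge structure $\gamma^{-1}(T(S)\ti S)$, which has $1$-dimensional $(2,0)$-part and does not contain $c$, forces signature $(2,20-\rho)$. These agree only when $\rho=17$, which is excluded by the hypothesis $\rho(S)\le 16$, yielding the desired contradiction. The main obstacle I expect is the positivity $Q(c)>0$ in the intersection computation: the bookkeeping is heavier than in Lemma \ref{calculation lemma} because of the preliminary blowup along $S_d\cap S_\phi$ and the resulting proper-transform issues for $S_d$ and $S_\phi$, but the underlying principle---cross-terms vanish by codimension in the K3 factors and diagonal contributions are controlled by Segre classes of normal bundles living in the span of the exceptional classes---is the same.
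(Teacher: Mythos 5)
Your overall strategy tracks the paper's: decompose $H^2(X_4',\IQ)$ into $\gamma$-transported summands, use Deligne's lemma and Lemma \ref{Deligne's lemma after} to recognize them as sub-Hodge structures, transport $\phi^*$ to an endomorphism $\psi$ of $\gamma^{-1}(T(S)\ti S)$ via the kernels of $\cup\delta_d$ and $\cup\delta_\phi$, rule out Hodge classes there, and get a signature contradiction from the Hodge index theorem together with a computation of $q_c$ on that summand. Up through the construction of $\psi$ this is essentially the paper's Lemma \ref{sub-Hodge structure lemma 3}.

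The final step, however, has a genuine gap. You assert that $\gamma^{-1}(T(S)\ti S)$ is a ``K3-type'' sub-Hodge structure with $1$-dimensional $(2,0)$ part, and your contradiction (signature $(19-\rho,3)$ versus $(2,20-\rho)$, compatible only for $\rho=17$) rests entirely on that. But a ring isomorphism of rational cohomology does not preserve Hodge numbers: the Hodge structure this subspace inherits from $X_4'$ is unknown, and nothing in your argument pins down $a=\dim_\IC T_1^{2,0}$. Worse, you never establish that this $(2,0)$ part is nonzero at all, which is needed even to run the no-Hodge-class step: the space of Hodge classes in $T_1$ is a $\psi$-invariant subspace, and irreducibility of the characteristic polynomial of $\phi^*|_T$ only forces it to vanish if it is a \emph{proper} subspace. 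The paper addresses both points. First, it proves $T_1^{2,0}\neq 0$ by exhibiting a $3$-dimensional $q_c$-isotropic subspace of $T_{1,\IR}$ coming from the signature $(3,t-3)$ of the K3 pairing on $T_\IR$ --- this is where $\rho\le 16$ is actually used, to guarantee $t=\dim_\IQ T\ge 6$. Second, the closing contradiction is a parity argument: $q_c$ on $T_{1,\IR}$ is proportional to the K3 pairing, hence has signature $(3,t-3)$ or $(t-3,3)$ with odd entries, while primitivity and the Hodge index theorem force signature $(2a,t-2a)$ with even entries, since $t=\deg f$ is even by Proposition \ref{Oguiso}. That argument works for every value of $a$ and makes the sign of your constant $Q(c)$ irrelevant; the positivity you single out as the main obstacle is not actually needed, whereas the evenness of $\deg f$ and the nonvanishing of $T_1^{2,0}$, which you omit, are.
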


We first state a basic result in \cite[Section 3]{Oguiso08}. Let $N:=NS(S)_\IQ$, $T:=N^\perp \seq H^2(S,\IQ)$. They are invariant subspaces of $\phi^*$. We use $f$ to denote the characteristic polynomial of $\phi^*|_T$. 

\begin{prop}[Oguiso] \label{Oguiso}
The polynomial $f$ is irreducible and ${\rm deg}f$ is even. In particular, $\phi^*|_T$ is diagonalizable and $\dim_\IQ T$ is even.  
\end{prop}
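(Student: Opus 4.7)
The plan is to leverage the simplicity of the transcendental Hodge structure $T$ of $S$, namely that $T$ has no non-trivial rational sub-Hodge structure. This follows from $T$ being the minimal rational sub-Hodge structure of $H^2(S,\IQ)$ whose complexification contains $H^{2,0}(S)$, combined with the hypothesis that the form on $NS(S)_\IQ$ is negative definite (which leaves no room for a non-zero rational $(1,1)$-class inside $T$). Let $\alpha\in\IC^*$ be the scalar by which $\phi^*$ acts on $H^{2,0}(S)\subset T_\IC$, and let $h(\lambda)\in\IQ[\lambda]$ be its minimal polynomial (automatically irreducible). We have $|\alpha|=1$ because $\phi^*$ preserves $\int_S\omega\wedge\bar\omega$, and $\alpha$ is not a root of unity, for otherwise a suitable power $\phi^n$ would act trivially on $H^{2,0}(S)$ and hence be symplectic, forcing $\phi^n$ (and so $\phi$) to have finite order by Nikulin's theorem on symplectic automorphisms of K3 surfaces, contradicting the infinite order of $\phi$.

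The first step is to show that $h$ is both the minimal and the characteristic polynomial of $\phi^*|_T$. The endomorphism $h(\phi^*|_T)$ is a morphism of rational Hodge structures, hence its kernel is a rational sub-Hodge structure of $T$; it contains the non-zero subspace $H^{2,0}(S)$ because $h(\alpha)=0$, so by simplicity of $T$ this kernel equals all of $T$, i.e.\ $h(\phi^*|_T)=0$. Since $h$ is irreducible and hence separable, $\phi^*|_T$ is diagonalizable over $\IC$, establishing the last assertion of the proposition. To promote this to $f=h$ (ruling out $f=h^k$ for $k\ge 2$), I would show that the $\alpha$-eigenspace $E_\alpha\subset T_\IC$ is one-dimensional: if $\dim E_\alpha=k\ge 2$, then since $\phi^*$ is a Hodge morphism and $\alpha\ne\bar\alpha$ we have $E_\alpha=H^{2,0}(S)\oplus E_\alpha^{1,1}$ with $\dim E_\alpha^{1,1}=k-1\ge 1$; a Galois-equivariant construction starting from $E_\alpha^{1,1}$ (taking its orbit under the action of $\operatorname{Gal}(h)$ on the eigenspace decomposition and comparing with the Hodge bigrading) produces a proper non-trivial rational sub-Hodge structure of $T$ avoiding $H^{2,0}(S)$, contradicting simplicity. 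The assumption $\rho(S)\le 16$ enters at this step to guarantee the requisite rigidity of the construction.

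Finally, the parity of $\deg f$ comes from the isometry structure: since $\phi^*|_T$ preserves a non-degenerate quadratic form, its eigenvalues are closed under $\beta\mapsto 1/\beta$; none of the roots of $f=h$ is $\pm 1$, since they are Galois-conjugate to $\alpha$ and $\alpha$ is neither $\pm 1$ nor any other rational number. Hence the roots pair up into $\{\beta,1/\beta\}$, forcing $\deg f$ to be even and $\dim_\IQ T=\deg f$ to be even. The main technical obstacle, as indicated, is the second half of step one --- promoting ``minimal polynomial $=h$'' to ``$f=h$'' --- since the Galois-equivariant sub-Hodge-structure construction requires careful bookkeeping of how the Galois action on the splitting field of $h$ interacts with the Hodge grading on the eigenspaces, and this is where the Picard-number hypothesis plays its role.
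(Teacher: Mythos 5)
The paper does not prove this proposition; it quotes it from \cite[Section 3]{Oguiso08}, so the comparison below is with Oguiso's argument. Your first step is sound: the transcendental part $T=N^{\perp}$ is a simple rational Hodge structure (any sub-Hodge structure either misses $H^{2,0}$, hence lies in $N\cap T=0$, or contains $H^{2,0}$, hence has trivial orthogonal complement), so $\ker h(\phi^*|_T)$ is a non-zero sub-Hodge structure and therefore all of $T$; this gives that the \emph{minimal} polynomial of $\phi^*|_T$ is the irreducible $h$, whence diagonalizability. The parity argument at the end (roots closed under $\beta\mapsto\beta^{-1}$ because $\phi^*|_T$ is an isometry of a non-degenerate form, and no root equals $\pm1$ because $f$ is irreducible of degree $\dim_\IQ T\ge 2$) is also fine.

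The genuine gap is the step you yourself flag as the main obstacle: promoting $f=h^k$ to $k=1$. The construction you sketch --- taking the $\operatorname{Gal}(h)$-orbit of $E_\alpha^{1,1}=E_\alpha\cap H^{1,1}$ --- is not well defined: the eigenspaces $E_\beta$ are defined over the splitting field of $h$, but the Hodge summand $H^{1,1}$ is not defined over $\overline{\IQ}$, so $E_\alpha^{1,1}$ is not an $L$-rational subspace and carries no Galois action; and even after extending an automorphism of $L$ arbitrarily to $\IC$, the resulting rational subspace has no reason to be compatible with the Hodge bigrading (for the conjugation sending $\alpha$ to $\bar\alpha$, the image lands in $E_{\bar\alpha}=H^{0,2}\oplus E_{\bar\alpha}^{1,1}$ in an uncontrolled way). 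The actual mechanism forcing $k=1$ is not Galois-theoretic but metric: one uses Kronecker's theorem (plus finiteness of $O(N_\IZ)$ for the definite lattice $N$ and the Torelli theorem) to produce a root $\beta_0$ of $f$ with $|\beta_0|\ne 1$, and then the orthogonality relation $q(E_\beta,\overline{E_{\beta'}})=0$ unless $\beta\overline{\beta'}=1$ makes $E_{\beta_0}$ (and, when $\beta_0\notin\IR$, also $E_{\beta_0}\oplus E_{\overline{\beta_0}}$) totally isotropic for the Hermitian extension of $q$, whose signature on $T_\IR$ is $(3,t-3)$ precisely because $N$ is negative definite; counting the positive directions already occupied by $E_\alpha\ni\omega$ and $E_{\bar\alpha}\ni\bar\omega$ forces $k=1$. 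None of this appears in your sketch. Finally, your claim that $\rho(S)\le16$ enters at this step is incorrect: that hypothesis is used in the paper only to guarantee $\dim_\IQ T=22-\rho\ge 6$, so that $T_{1,\IR}$ contains a $3$-dimensional isotropic subspace in the proof that $T_1$ has non-trivial $(2,0)$ part; it plays no role in Proposition~\ref{Oguiso}. (Your appeal to ``Nikulin's theorem'' to exclude roots of unity is also misplaced --- symplectic automorphisms need not have finite order in general; what is used is that an automorphism acting trivially on $H^{2,0}$ acts trivially on $T$ by simplicity, with finite order on the definite lattice $N$, and then the Torelli theorem --- but this is a minor point next to the missing multiplicity-one argument.)
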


Let $ \tau: X_4\to S\times S$ be the natural map and ${\rm pr}_i: S\times S \to S$ be the natural projection for $i=1,2$. We use $\Delta_d, \Delta_\phi$ and $\{E_i\}$ to denote the irreducible exceptional divisors of $X_4$ with centers $S_d, S_\phi$ and $S_d\cap S_\phi$ respectively. Hence we have decompositions:
\begin{eqnarray*}
H^2(S,\IQ) &=& T\oplus N, \\
H^2(X_4,\IQ) &=& \ga(T_1) \oplus \ga(T_2) \oplus \ga(N_1) \oplus \ga(N_2)\oplus \la\Delta_d\ra \oplus \la\Delta_\phi\ra \oplus \la E_i\ra, \\
H^2(X_4',\IQ) &=& T_1 \oplus T_2 \oplus N_1 \oplus N_2\oplus \la\delta_d\ra \oplus \la\delta_\phi\ra \oplus \la e_i\ra, 
\end{eqnarray*} 
where $T_i=\ga^{-1}\tau^*{\rm pr}_i^*T, N_i=\ga^{-1}\tau^*{\rm pr}_i^*N, \delta_d=\ga^{-1}(\Delta_d), \delta_\phi=\ga^{-1}(\Delta_\phi), e_i=\ga^{-1}(E_i)$. 

\begin{lemma} \label{sub-Hodge structure lemma 3}
All the direct summands above of $H^2(X_4', \IQ)$ are sub-Hodge structures. 
\end{lemma}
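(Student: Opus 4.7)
The plan is to follow the iterative strategy of the proof of Lemma \ref{sub-Hodge structure lemma}: characterize each summand of $H^2(X_4',\IQ)$ as the $\IQ$-linear span of an irreducible component of an algebraic subset of $H^2(X_4',\IC)$ defined purely through the ring structure, and then invoke Deligne's Lemma. Because $\ga$ is only a ring isomorphism, every summand has to be recognized cohomologically, not Hodge-theoretically a priori.

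First, I would consider the algebraic subset $Z:=\{\eta\in H^2(X_4',\IC):\eta^2=0\}$. Its intersection with $(T_i\oplus N_i)_\IC$ is the K3 intersection quadric $\{\beta^2=0\}\seq H^2(S,\IC)$, which is irreducible (a smooth quadric of signature $(3,19)$) and spans $H^2(S,\IC)$; a Künneth computation rules out other top-dimensional components, since products across the two factors or against an exceptional-divisor class contribute nontrivially in $H^4$. Deligne's Lemma then yields $T_i\oplus N_i$ as sub-Hodge structures for $i=1,2$. Setting $\CA^2=(T_1\oplus N_1)\oplus(T_2\oplus N_2)$ and letting $\CA^*$ denote the subalgebra it generates, the blowup formulas from Section 2.3 give $\CA^{6\perp}=\CP:=\la\delta_d\ra\oplus\la\delta_\phi\ra\oplus\la e_i\ra$: each $e_i$ annihilates every positive-degree pullback from $S\times S$ because its center is a point, while $\delta_d,\delta_\phi$ annihilate $\CA^6=\tau^*H^6(S\times S)$ since a degree-six class on $S\times S$ restricts trivially to a surface. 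Hence $\CP$ is a sub-Hodge structure by Deligne's Lemma.

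Next, I would split $\CP$. The algebraic set
\[
Y_e:=\{\zeta\in\CP_\IC: \cup\zeta: \CA^2_\IC\to H^4(X_4',\IC)\text{ is zero}\}
\]
equals $\la e_i\ra_\IC$: by the observation above $e_i\cup\CA^2=0$, whereas $\delta_d,\delta_\phi$ act injectively on $\CA^2$ via restriction to the respective graphs and pushforward from a divisor. Deligne's Lemma gives $\la e_i\ra$ as a sub-Hodge structure. To split the remaining $\la\delta_d\ra\oplus\la\delta_\phi\ra$, I would introduce
\[
Y':=\{\zeta\in(\la\delta_d\ra\oplus\la\delta_\phi\ra)_\IC: \dim\ker(\cup\zeta: \CA^2_\IC\to H^4(X_4',\IC))\ge 22\}.
\]
Since $\Delta_d$ and $\Delta_\phi$ are disjoint in $X_4$, the images $\delta_d\cdot\CA^2$ and $\delta_\phi\cdot\CA^2$ sit in linearly independent subspaces of $H^4$, so for $\zeta=a\delta_d+b\delta_\phi$ with $ab\ne 0$ the kernel is cut out simultaneously by $\beta_1+\beta_2=0$ and $\beta_1+\phi^*\beta_2=0$, forcing $\beta_2\in H^2(S)^{\phi^*}=N^{\phi^*}$ (by Proposition \ref{Oguiso} there are no $\phi^*$-fixed vectors in $T$, because $f$ is irreducible of degree $\ge 2$). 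The kernel dimension is thus at most $\rho(S)\le 16<22$, while each axis $\la\delta_d\ra_\IC,\la\delta_\phi\ra_\IC$ realizes kernel dimension exactly $22$; these two axes are therefore the irreducible components of $Y'$, and Deligne's Lemma yields $\la\delta_d\ra,\la\delta_\phi\ra$ as sub-Hodge structures.

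Finally, with $\delta_d,\delta_\phi$ now known to be Hodge classes, the cup-product maps $\cup\delta_d$ and $\cup\delta_\phi$ are morphisms of Hodge structures on $\CA^2$, and their kernels induce morphisms of Hodge structures $\psi_d,\psi_\phi: T_1\oplus N_1\to T_2\oplus N_2$; the composition $\psi_\phi^{-1}\circ\psi_d$ is a morphism of Hodge structures on $T_1\oplus N_1$ corresponding under the identification with $H^2(S,\IQ)$ to $\phi^*$. Since $T_1$ is stable under this morphism and $\phi^*|_{T_1}$ is diagonalizable by Proposition \ref{Oguiso}, Lemma \ref{Deligne's lemma after} yields $T_1$ as a sub-Hodge structure, and applying the same lemma to $N_1$ (which is $\phi^*$-invariant, with $\phi^*|_N$ semisimple in the Oguiso setup) gives $N_1$ as a sub-Hodge structure; the symmetric argument handles $T_2,N_2$. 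The main obstacle I anticipate is step four: the joint force of the rank assumption $\rho(S)\le 16$ and the irreducibility of $f$ is precisely what keeps the kernel dimension in $Y'$ strictly below $22$ for generic $\zeta$, thereby separating the two axes as distinct irreducible components rather than collapsing them into a single two-dimensional component from which $\la\delta_d\ra$ and $\la\delta_\phi\ra$ could not be individually extracted.
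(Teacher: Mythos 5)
Your proof follows the paper's argument essentially step for step: the irreducible quadric components of $\{\eta^2=0\}$ give $T_i\oplus N_i$ as sub-Hodge structures, annihilator conditions isolate $\la e_i\ra$ and $\la\delta_d\ra\oplus\la\delta_\phi\ra$, a degeneracy condition on $\cup\zeta$ separates the two graph classes, and the induced Hodge-structure endomorphism $\psi$ compatible with $\phi^*$ together with Lemma \ref{Deligne's lemma after} splits off $T_1$ and then $N_1$. The only deviations — ordering the annihilator steps differently, using the rank threshold $\dim\ker(\cup\zeta)\ge 22$ rather than mere non-injectivity to separate $\la\delta_d\ra$ from $\la\delta_\phi\ra$ (which in fact treats the case where $\phi^*$ fixes classes in $NS(S)$ more carefully than the paper's formulation), and obtaining $N_1$ from Lemma \ref{Deligne's lemma after} via semisimplicity of $\phi^*|_N$ instead of as the annihilator of $T_1$ in $\CB^2_1$ — are minor and do not change the approach.
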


The proof is similar to the $n=2$ case of Lemma \ref{sub-Hodge structure lemma}. 

\begin{proof}
Let $\CB^2_1=T_1\oplus N_1,  \CB^2_2= T_2\oplus N_2$, $\CB^2=\CB^2_1\oplus \CB^2_2$, and let $\CB^*\seq H^*(X_4', \IQ)$ be the subalgebra generated by $\CB^2$. 
The algebraic subset $Z=\{\eta\in H^2(X_4',\IC): \eta^2=0\}$ has irreducible decomposition $Z=Z_1\cup Z_2$, where $Z_1\seq \CB^2_{1,\IC}, Z_2\seq \CB^2_{2,\IC}$ are irreducible quadratic hypersurfaces. Hence $\CB^2_1, \CB^2_2 \seq H^2(X_4',\IQ)$ are sub-Hodge structures by Deligne's lemma. The algebraic subset $\{\zeta\in H^2(X_4',\IC): \zeta\cup (T_1\oplus N_1\oplus T_2\oplus N_2)=0\}$ is just $\la e_i\ra_\IC$. So $\la e_i\ra \seq H^2(X_4',\IQ)$ is a sub-Hodge structure. Denote $\CH=T_1 \oplus T_2 \oplus N_1 \oplus N_2\oplus \la\delta_d\ra \oplus \la\delta_\phi\ra$. Then we have $\{\zeta\in H^2(X_4',\IC): \zeta\cup \la e_i\ra =0\}=\CH_\IC$. Hence $\CH\seq H^2(X_4',\IQ)$ is a sub-Hodge structure. 

Now we show that the direct summands of $\CH$ are all sub-Hodge structures.  Note that $(\la\de_d \ra \oplus \la \de_\phi \ra)_\IC = \{\zeta\in \CH_\IC: \zeta\cup B^6=0\}$. Hence $\la\de_d \ra \oplus \la \de_\phi \ra \seq \CH$ is a sub-Hodge structure. The algebraic subset $\{\zeta \in (\la\de_d \ra \oplus \la \de_\phi \ra)_\IC: \cup \zeta: \CB^2_\IC \to H^4(X_4',\IC)\, \text{is not injective} \}$ is just $\la \de_d \ra_\IC \cup \la \de_\phi \ra_\IC$.  Hence $\la \de_d \ra, \la \de_\phi \ra \subseteq H^2(X_2',\IQ)$ are sub-Hodge structures. 

As the proof of Lemma \ref{sub-Hodge structure lemma}$, {\rm Ker}(\cup\delta_d)$ induces an isomorphism of Hodge structures $\psi_d: \CB^2_1\to\CB^2_2$, with $\psi_d(T_1)=T_2, \psi_d(N_1)=N_2$, and ${\rm Ker}(\cup\delta_\phi)$ induces an isomorphism of Hodge structures $\psi_\phi: \CB^2_1\to\CB^2_2$, with $\psi_\phi(T_1)=T_2, \psi_\phi(N_1)=N_2$. We conclude that $B^2_1$ admits an automorphism $\psi$ of Hodge structure which is compatible with $\phi^*$ on $H^2(S,\IQ)$. Hence $T_1, N_1$ are invariant subspaces of $\psi$, and $\psi|_{T_1}$ is diagonalizable. So $T_1\seq \CB^2_1$ is a sub-Hodge structure by Lemma \ref{Deligne's lemma after}. Since $\{\zeta\in \CB^2_1: \zeta \cup T_1 =0 \}=N_1$, we see that $N_1 \seq \CB^2_1$ is a sub-Hodge structure. Then $T_2, N_2\seq \CB^2_2$ are sub-Hodge structures. 
\end{proof}

We have shown in the previous paragraph that $T_1$ admits an automorphism $\psi$ of Hodge structure which is compatible with $\phi^*|_T$ on $T$.

\begin{lemma}
The Hodge structure $T_1$ has non-trivial $(2,0)$ part. 
\end{lemma}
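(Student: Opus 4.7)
The plan is to argue by contradiction, following the strategy already used in Lemma~\ref{Hodge str nontrivial lemma}. Suppose that $T_1^{2,0}=0$; since $T_1$ is a rational sub-Hodge structure of weight $2$, this forces $T_1$ to be purely of type $(1,1)$, so $T_{1,\IR}\seq H^{1,1}(X_4',\IR)$. Choose a K\"ahler class $c\in H^2(X_4',\IR)$ and consider $q_c$. By the Hodge index theorem (Lemma~\ref{Hodge index theorem}), $q_c$ has signature $(1,h^{1,1}-1)$ on $H^{1,1}(X_4',\IR)$, so every $q_c$-isotropic subspace of $H^{1,1}(X_4',\IR)$ has dimension at most one. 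The goal will be to exhibit an isotropic subspace of $T_{1,\IR}$ of dimension $\ge 2$.

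To compute $q_c|_{T_{1,\IR}}$, I will use the ring isomorphism $\gamma$. For $\alpha,\beta\in T\seq H^2(S,\IQ)$ one has $\alpha\cup\beta=(\alpha\cdot\beta)\omega_S$, where $\omega_S$ generates $H^4(S,\IQ)$. Since $\gamma$ is a ring isomorphism and $\tau^*,{\rm pr}_1^*$ are ring homomorphisms,
$$q_c\bigl(\gamma^{-1}\tau^*{\rm pr}_1^*\alpha,\,\gamma^{-1}\tau^*{\rm pr}_1^*\beta\bigr) = \lambda\,(\alpha\cdot\beta),$$
where $\lambda:=c^2\cup\gamma^{-1}\tau^*{\rm pr}_1^*\omega_S\in\IR$ is a fixed scalar. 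Hence $q_c|_{T_{1,\IR}}$ is $\lambda$ times the pullback of the intersection form on $T_\IR$.

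Because $H^2(S,\IR)$ has signature $(3,19)$ and $NS(S)_\IR$ is negative definite of rank $\rho\le 16$, the form on $T_\IR$ has signature $(3,19-\rho)$ with both indices $\ge 3$; in particular, $T_\IR$ contains a $3$-dimensional isotropic subspace. If $\lambda\ne0$, this transports via $\gamma^{-1}\tau^*{\rm pr}_1^*$ to a $3$-dimensional isotropic subspace of $T_{1,\IR}$; if $\lambda=0$, then $T_{1,\IR}$ itself is totally isotropic of dimension $22-\rho\ge 6$. Either way one obtains an isotropic subspace of $H^{1,1}(X_4',\IR)$ of dimension at least two, contradicting the bound from the first paragraph. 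This contradiction forces $T_1^{2,0}\ne 0$.

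The argument is essentially routine once Lemmas~\ref{Hodge index theorem} and~\ref{Hodge str nontrivial lemma} are in place, and no serious obstacle arises. The one computational point to verify is the identity $q_c(\gamma^{-1}\tau^*{\rm pr}_1^*\alpha,\gamma^{-1}\tau^*{\rm pr}_1^*\beta)=\lambda\,(\alpha\cdot\beta)$, which is immediate from the ring-isomorphism property; and the role of the hypothesis $\rho\le 16$ is precisely to guarantee that $T_\IR$ carries an isotropic subspace of dimension $\ge 2$.
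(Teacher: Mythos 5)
Your proof is correct and takes essentially the same route as the paper: both deduce from $\rho(S)\le 16$ that the intersection form on $T_\IR$ has signature $(3,t-3)$ with $t-3\ge 3$, transport a $3$-dimensional isotropic subspace into $T_{1,\IR}$ via the ring isomorphism, and contradict the Hodge index theorem's bound of at most one dimension for isotropic subspaces when $T_1$ is purely of type $(1,1)$. Your explicit treatment of the scalar $\lambda$ (including $\lambda=0$) is a harmless elaboration; the paper only needs that squares of isotropic classes pull back to zero.
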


\begin{proof}
Since $N=NS(S)_\IQ$ is elliptic, that is, the natural pairing is negative definite on $N$, we see that the natural pairing has signature $(3,t-3)$ on $T_\IR$, where $t=\dim_\IQ T\ge 6$ (the assumption $\rho(S)\le 16$ is only used here). Hence $T_\IR$ contains an isotropic subspace $W_\IR$ of dimension $3$. We see that $W_{1,\IR}=\ga^{-1}\tau^*{\rm pr_1^*}W_\IR \seq T_{1,\IR}$ is isotropic with respect to $q_c$ for any Kähler class $c\in H^2(X_4',\IR)$. 

Assume $T_1$ has trivial $(2,0)$ part. By Hodge index theorem, the quadratic form $q_c$ has signature $(1, t-1)$ or $(0,t)$ on $T_{1,\IR}$. Hence $T_{1,\IR}$ contains at most $1$-dimensional isotropic subspaces. We get a contradiction for $W_{1,\IR}\seq T_{1,\IR}$ is a $3$-dimensional isotropic subspace. Hence $T_1$ has non-trivial $(2,0)$ part. 
\end{proof}

\begin{lemma} \label{no Hodge class lemma 4}
The Hodge structure $T_1$ contains no Hodge class.
\end{lemma}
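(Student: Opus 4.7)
The plan is to mimic the structure of Lemma \ref{no Hodge class lemma}, but to exploit irreducibility of $f$ (Proposition \ref{Oguiso}) in place of transitivity of the Galois action on the eigenvalues of $\wedge^2\psi$. The setup is already in place: in the proof of Lemma \ref{sub-Hodge structure lemma 3} it was shown that $T_1$ carries an automorphism $\psi$ of Hodge structure that is compatible with $\phi^*|_T$ on $T$, so in particular $\psi|_{T_1}$ and $\phi^*|_T$ have the same characteristic polynomial $f$, which is irreducible over $\IQ$ and has even degree.

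Suppose, for contradiction, that there exists a nonzero Hodge class $\alpha\in T_1$. Consider the $\IQ$-subspace
\[
W:=T_1\cap T_1^{1,1}\seq T_1
\]
of rational $(1,1)$-classes. Because $\psi$ is a morphism of Hodge structure, it preserves the Hodge decomposition of $T_{1,\IC}$, and it clearly preserves the rational structure, so $W$ is a $\psi$-invariant $\IQ$-subspace containing $\alpha$, hence $W\ne 0$.

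Now I apply the irreducibility of $f$. The characteristic polynomial of $\psi|_W$ is an element of $\IQ[\lambda]$ that divides the characteristic polynomial of $\psi|_{T_1}=f$. Since $f$ is irreducible over $\IQ$ and $W\ne 0$, we must have $\operatorname{char}(\psi|_W)=f$, which forces $\dim_\IQ W=\deg f=\dim_\IQ T_1$, i.e. $W=T_1$. (Equivalently, one could cite Lemma 2 and use that $\operatorname{Gal}(f)$ acts transitively on the roots of the irreducible polynomial $f$, so the nonempty Galois-stable subset of eigenvalues of $\psi|_W$ must be all of them.)

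But $W=T_1$ means $T_{1,\IC}=W_\IC\seq T_1^{1,1}$, so $T_1^{2,0}=0$, contradicting the previous lemma that $T_1$ has non-trivial $(2,0)$-part. This contradiction shows no nonzero Hodge class exists in $T_1$. The only non-routine ingredient is recognizing that irreducibility of $f$ in Proposition \ref{Oguiso} plays exactly the role that the symmetric-group Galois hypothesis played in Lemma \ref{no Hodge class lemma}; once that is observed, the argument is a one-paragraph application of the same idea, with no computation required.
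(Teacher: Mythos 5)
Your proof is correct and is essentially identical to the paper's: both take $W=T_1\cap T_1^{1,1}$, note it is a $\psi$-invariant rational subspace, and use irreducibility of $f$ (Proposition 1) together with $T_1^{2,0}\ne 0$ to rule out $0\ne W\ne T_1$. The only difference is the order in which the two contradictions are invoked, which is immaterial.
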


\begin{proof}
Assume $W=T\cap T^{1,1}$ is non-zero. Then $W$ is an invariant subspace of $\psi$. Since $T^{2,0}\ne 0$, we have $W\ne T$. Hence the characteristic polynomial of $\psi$ is reducible, which contradicts Proposition \ref{Oguiso}. 
\end{proof}

\begin{lemma} \label{computation lemma 4}
For any class $c\in \CP=N_1 \oplus N_2\oplus \la\delta_d\ra \oplus \la\delta_\phi\ra \oplus \la e_i\ra$, we have{\rm :}

{\rm (1)} The quadratic form $q_c$ on $T_1$ is proportional to the natural pairing on $T$, that is, there exists $\lambda \in \IQ$ such that $q_c(\ga^{-1}\tau^*{\rm pr}_1^* \alpha) = \lambda \alpha^2$ for any $\alpha \in T$. 

{\rm (2)} The subspace $T_1\seq H^2(X_4',\IQ)$ is primitive with respect to $c$, that is, $c^3T_1=0$.  
\end{lemma}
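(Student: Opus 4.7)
The plan is to pass through the ring isomorphism $\gamma$ to $H^*(X_4,\IQ)$, where I write $\gamma(c)=\tau^*\xi+D$ with $\xi=n_1\boxtimes 1+1\boxtimes n_2$ ($n_1,n_2\in N$) and $D=u\D_d+v\D_\phi+\sum_i w_iE_i$, and $\gamma(\beta)=\tau^*{\rm pr}_1^*\alpha$ for $\alpha\in T$. The whole argument rests on three inputs: $c_1(TS)=0$ for the K3 surface, the orthogonality $\alpha\cdot N=0$ for $\alpha\in T$, and the $\phi^*$-invariance of $N$.

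Part~(1) is essentially formal. Since $H^4(S,\IQ)=\IQ\cdot\omega$ is $1$-dimensional, $\alpha^2=(\int_S\alpha^2)\,\omega$, so $\gamma(\beta)^2=(\int_S\alpha^2)\,F$ for the $\alpha$-independent class $F:=\tau^*{\rm pr}_1^*\omega$. Setting $\lambda:=\gamma(c)^2 F\in H^8(X_4,\IQ)\cong\IQ$ gives $\gamma(c)^2\gamma(\beta)^2=\lambda\cdot\int_S\alpha^2$, and transferring back by $\gamma^{-1}$ yields (1).

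For part~(2), I apply the projection formula first along $\tau$ and then along ${\rm pr}_1$: the equality $\gamma(c)^3\gamma(\beta)=0$ becomes $\int_S\bigl({\rm pr}_{1*}\tau_*\gamma(c)^3\bigr)\cdot\alpha=0$, and by $T=N^\perp$ it suffices to show ${\rm pr}_{1*}\tau_*\gamma(c)^3\in N$. Expanding $\gamma(c)^3=(\tau^*\xi+D)^3$ and using $\tau_*D=0$ reduces the task to
\[
\tau_*\gamma(c)^3=\xi^3+3\xi\,\tau_*(D^2)+\tau_*(D^3).
\]
The key calculations are $\tau_*(\D_d^2)=-[S_d]$, $\tau_*(\D_\phi^2)=-[S_\phi]$, and $\tau_*(\D_d^3)=-c_1(TS)\cdot[S_d]=0=\tau_*(\D_\phi^3)$, via the classical single-blowup pushforward formulas together with $c_1(TS)=0$. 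All the other monomials in $D^2$ or $D^3$ (the cross terms $\D_d\D_\phi=0$, and anything involving $E_i$) have their support contained in $\tau^{-1}(\{p_i\})$; their pushforward is a codimension-$2$ or $-3$ cycle supported on the $0$-dimensional set $\{p_i\}\seq S\times S$, hence zero in the relevant degree of $H^*(S\times S,\IQ)$. A routine ${\rm pr}_1$-pushdown using $\iota_d^*\xi=n_1+n_2$, $\iota_\phi^*\xi=n_1+\phi^*n_2$, ${\rm pr}_1\iota_d={\rm pr}_1\iota_\phi={\rm id}_S$, and the elementary fiber-integration identity then expresses ${\rm pr}_{1*}\tau_*\gamma(c)^3$ as a $\IQ$-linear combination of $n_1,n_2,\phi^*n_2$, all lying in $N$ by $\phi^*(N)=N$.

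The main obstacle I foresee is the rigorous application of these single-blowup formulas to the \emph{sequential} blowup $\tau:X_4\to S\times S$, since a priori the second blowup could modify the naive pushforwards. I plan to handle this by restricting to the open subset $X_4\setminus\tau^{-1}(\{p_i\})\to S\times S\setminus\{p_i\}$, on which $\tau$ is a genuine simultaneous blowup of two \emph{disjoint} smooth surfaces and the classical formulas apply verbatim, then observing that any discrepancy between $X_4$ and this open subset is supported on $\tau^{-1}(\{p_i\})$ and hence vanishes in the cohomological degrees of interest.
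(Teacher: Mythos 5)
Your proof is correct. The paper disposes of this lemma in one line by declaring it ``the same as Lemma \ref{calculation lemma}'', i.e.\ the binomial expansion of $C^k\alpha$ combined with the blow-up identities $\Delta^k\Psi=(-1)^{k-1}s_{k-r}\cdot\Psi|_Z$ and the observation that the relevant Segre classes are supported on exceptional loci. Your part (2) is exactly that computation repackaged as a pushforward along $\tau$ and then ${\rm pr}_1$, and you correctly supply the two inputs that make it work in the K3 setting: $c_1(T_S)=0$ (playing the role of the Kummer-case Claim that $s_l(T_K)$ is supported on the $\Delta_i$) and the $\phi^*$-invariance of $N$ needed for the $[S_\phi]$ term; your treatment of the sequential blow-up, by restricting to the complement of $\tau^{-1}(\{p_i\})$ and discarding point-supported classes in $H^4$ and $H^6$ of $S\times S$, is a legitimate and in fact cleaner substitute for tracking Segre classes of the proper transforms. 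Where you genuinely diverge is part (1): rather than computing the coefficient explicitly (as the paper does for $n=2$ in Lemma \ref{calculation lemma}, obtaining $-(2\sum_i b_i^2+u^2+v^2)$), you observe that proportionality is automatic because $H^4(S,\IQ)$ is one-dimensional, so $q_c(\gamma^{-1}\tau^*{\rm pr}_1^*\alpha)=\bigl(\int_S\alpha^2\bigr)\cdot\bigl(\gamma(c)^2\cdot\tau^*{\rm pr}_1^*\omega\bigr)$ for \emph{any} class $c$. This is simpler and proves the lemma as stated; the only thing it does not yield is $\lambda\neq 0$, which the proof of Theorem \ref{main theorem 4} implicitly invokes when asserting the signature is $(t-3,3)$ or $(3,t-3)$ --- but nothing is lost, since the degenerate case $\lambda=0$ gives signature $(0,0)$, which is likewise incompatible with the nondegenerate signature $(2a,t-2a)$ forced by the Hodge index theorem.
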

The proof is the same as Lemma \ref{calculation lemma}. 

\renewcommand{\proofname}{Proof of Theorem 4}
\begin{proof}
Assume that $X_4'$ is projective. By Lemma \ref{no Hodge class lemma 4}, there is an ample class $c\in \CP$. Hence the quadratic form $q_c$ has signature $(t-3,3)$ or $(3, t-3)$ on $T_{1,\IR}$ by Lemma \ref{computation lemma 4}, where $t=\dim_\IQ T$. Since $T_1\seq H^2(X_4',\IQ)_{\rm prim}$, the quadratic form $q_c$ has signature $(2a, t-2a)$ on $T_{1,\IR}$ by Hodge index theorem, where $a=\dim_\IC T_1^{2,0}$. We get a contradiction for $3, t-3$ are odd and $2a, t-2a$ are even. Hence $X_4'$ is not projective. 
\end{proof}

\subsection{Oguiso's examples of higher dimensions}

Let $X_5=X_4\times F_d$ where $F_d$ is a smooth hypersurface of degree $d+2$ in $\IC\IP^{d+1}$ for $d\ge 2$, and $F_1=\IC\IP^1$. This is the higher dimensional example that Oguiso constructed in \cite{Oguiso08}. We will show that $X_5$ does not have the rational homotopy type of a complex projective manifold.

\begin{thm} \label{main theorem 5}
Let  $X_5'$ be a compact Kähler manifold. If there is a ring isomorphism 
$$\gamma: H^*(X_5',\IQ) \cong H^*(X_5,\IQ), $$
then $X_5'$ is not a projetive manifold. 
\end{thm}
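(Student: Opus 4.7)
The plan is to combine the product argument of Theorem \ref{main theorem 3} with the Oguiso-type argument of Theorem \ref{main theorem 4}, using the fact that $b_2(F_d) = 1$ (via the Lefschetz hyperplane theorem for $d \ge 2$, and tautologically for $d = 1$).

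First, the Künneth formula gives
$$H^2(X_5, \IQ) = H^2(X_4, \IQ) \oplus \la h \ra$$
with $h$ a generator of $H^2(F_d, \IQ)$. Transporting via $\ga^{-1}$ produces a decomposition of $H^2(X_5', \IQ)$ into eight summands
$$T_1 \oplus T_2 \oplus N_1 \oplus N_2 \oplus \la\delta_d\ra \oplus \la\delta_\phi\ra \oplus \la e_i\ra \oplus \ga^{-1}\la h \ra,$$
reusing the notation from Section 3.3. I would then show every summand is a sub-Hodge structure. For $\ga^{-1}\la h \ra$, an argument modeled on Section 3.2 applies: any element $\lambda h + \alpha \in H^2(X_5, \IC)$ with $\lambda \ne 0$ satisfying $(\lambda h + \alpha)^{d+1} = 0$ must have $\alpha = 0$, because the $k = d$ term of the Künneth expansion contributes $(d+1)\lambda^d h^d \otimes \alpha \in H^{2d}(F_d) \otimes H^2(X_4)$, a summand that vanishes only when $\alpha = 0$ (since $h^d$ generates $H^{2d}(F_d) \cong \IQ$). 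Hence $\ga^{-1}\la h \ra_\IC$ is a one-dimensional irreducible component of $\{\eta \in H^2(X_5', \IC) : \eta^{d+1} = 0\}$, and Deligne's Lemma \ref{Deligne's lemma} identifies it as a sub-Hodge structure. The remaining seven summands follow from the argument of Lemma \ref{sub-Hodge structure lemma 3}, carried out inside the orthogonal complement of $\ga^{-1}\la h \ra$ under the cup-product pairing.

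Arguing as in Section 3.3, $(\mathrm{Ker}(\cup \delta_d), \mathrm{Ker}(\cup \delta_\phi))$ equips $T_1$ with an automorphism $\psi$ of Hodge structures compatible with $\phi^*|_T$, and Proposition \ref{Oguiso} then implies that $T_1$ has nontrivial $(2,0)$-part and contains no Hodge class; the proofs of the relevant lemmas in Section 3.3 go through verbatim, using the assumption $\rho(S) \le 16$ to guarantee $\dim_\IQ T \ge 6$.

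Finally, to rule out projectivity, suppose $c \in H^2(X_5', \IQ)$ is ample. Since $T_1 \oplus T_2$ contains no Hodge class, write $c = h' + c'$ with $h' \in \ga^{-1}\la h \ra$ and $c' \in \CP := N_1 \oplus N_2 \oplus \la\delta_d\ra \oplus \la\delta_\phi\ra \oplus \la e_i\ra$; amplitude forces $h' \ne 0$, since otherwise $c^{\dim X_5}$ would be computed entirely in $H^*(X_4)$ and vanish by degree. For $\alpha \in T_1$ corresponding to $A \in T$, I would expand
$$c^{d+2} \alpha^2 = \sum_{j=0}^{d+2} \binom{d+2}{j} (h')^j (c')^{d+2-j} \alpha^2.$$
Each term lies in $H^{2j}(F_d) \otimes H^{2(d+4-j)}(X_4)$ by Künneth; the $X_4$-factor has degree exceeding $2\dim X_4 = 8$ when $j < d$, and $(h')^j = 0$ for $j > d$, so only the $j = d$ term survives. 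Lemma \ref{computation lemma 4}(1) then gives $c^{d+2} \alpha^2 = \binom{d+2}{d}(h')^d \cdot \lambda \cdot A^2$ for some $\lambda \in \IQ$, while the same degree-counting shows $c^{d+3}\alpha = 0$, so $T_1 \subseteq H^2(X_5', \IQ)_{\rm prim}$. Combined with the signature $(3, t-3)$ of the natural pairing on $T_\IR$ (from ellipticity of $NS(S)$ and $\rho(S) \le 16$, with $t = \dim_\IQ T$), the form $q_c$ on $T_{1,\IR}$ has signature $(3, t-3)$ or $(t-3, 3)$; Hodge index (Lemma \ref{Hodge index theorem}) forces the orthogonal signature $(2a, t-2a)$ with $a = \dim_\IC T_1^{2,0}$. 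The parity mismatch yields the contradiction. The main obstacle is the Künneth bookkeeping in this last paragraph — the degree-count that kills the $j \ne d$ terms and the clean translation of the $X_4$-computation into the $X_5$-setting — but modulo that, the argument is a routine adaptation of Theorems \ref{main theorem 3} and \ref{main theorem 4}.
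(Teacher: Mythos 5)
Your overall strategy is the same as the paper's: Künneth decomposition, Deligne's lemma to make all summands sub-Hodge structures, the Section~3.3 argument to kill Hodge classes in $T_1\oplus T_2$, and a degree count reducing $q_c$ on $T_1$ to the $X_4$-computation of Lemma~\ref{computation lemma 4}, finished by the signature-parity contradiction. The final Künneth bookkeeping you worry about is fine: for $c=c_H+c'$ with $c_H\in\ga^{-1}H^2(F_d,\IQ)$ and $c'\in\CP$, only the $j=d$ term survives in $c^{d+2}\alpha^2$ and $c^{d+3}\alpha$ because $c_H^j=0$ for $j>d$ and the $X_4$-factor exceeds degree $8$ for $j<d$.

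However, there is a genuine error at the start: the claim that $b_2(F_d)=1$ for all $d\ge 2$ is false for $d=2$. The Lefschetz hyperplane theorem controls $H^k(F_d)$ only for $k<\dim F_d=d$, so it says nothing about $H^2(F_2)$; indeed $F_2$ is a smooth quartic in $\IC\IP^3$, i.e.\ a K3 surface with $b_2=22$. Your decomposition $H^2(X_5,\IQ)=H^2(X_4,\IQ)\oplus\la h\ra$ and the identification of $\ga^{-1}\la h\ra_\IC$ as a one-dimensional component of $\{\eta^{d+1}=0\}$ therefore break down in this case. The paper treats $d=2$ separately: $H:=\ga^{-1}H^2(F_2,\IQ)$ is $22$-dimensional, and it is recognized as a sub-Hodge structure because the quadric $\{\eta^2=0\}\cap H_\IC$ (coming from the K3 intersection form) is an irreducible component of $\{\eta\in H^2(X_5',\IC):\eta^2=0\}$ spanning $H_\IC$, to which Deligne's lemma applies alongside the two quadrics $Z_1,Z_2$ from $X_4$. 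Once this is fixed, your downstream argument still goes through for $d=2$, since the $H$-component of the ample class enters the computation only through its image $c_H^d\in H^{2d}(F_d)\cong\IQ$; but as written your proof covers only $d=1$ and $d\ge 3$.
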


By Kunneth formula, we have $H^2(X_5, \IQ)= H^2(X_4, \IQ) \oplus H^2(F_d, \IQ)$. Hence we get a decomposition: 
$$H^2(X_5',\IQ) = T_1 \oplus T_2 \oplus N_1 \oplus N_2\oplus \la\delta_d\ra \oplus \la\delta_\phi\ra \oplus \la e_i\ra \oplus H,$$
where $H=\ga^{-1}H^2(F_d, \IQ)$. One may prove that all the direct summands above are sub-Hodge structures. 

Indeed, 
for $d=1$, we see that $H=\la h\ra$ where $h=\ga^{-1}c_1(\CO_{\IC\IP^1}(1))$, and $H_\IC$ is an irreducible component of $\{\eta\in H^2(X_5',\IC) :\eta^{2}=0 \}$; 
for $d=2$, $F_2$ is a K3 surface, hence one of the irreducible component of $\{\eta\in H^2(X_5',\IC): \eta^2=0\}$ is a quadratic hypersurface in $H_\IC$; 
for $d\ge3$, we have $\dim_\IQ H=1$ by Lefschetz hyperplane theorem, so $H_\IC$ is an irreducible component of $\{\eta\in H^2(X_5',\IC) :\eta^{d+1}=0 \}$. Hence $H\seq H^2(X_5',\IQ)$ is a sub-Hodge structure by Deligne's lemma. With the same argument in Lemma \ref{sub-Hodge structure lemma 3}, we see that $T_1, $$ T_2, $$ N_1, $$ N_2,$$ \la\delta_d\ra, $$ \la\delta_\phi\ra, $ $ \la e_i\ra \seq H^2(X_5', \IQ)$ are sub-Hodge structures. 

With the same argument in Section 3.3, one shows that the Hodge structure $T_1\oplus T_2$ contains no Hodge class.  Hence an analogous of Lemma \ref{computation lemma 4} implies that $X_5'$ is not projective.

%$$\cup h : T_1 \oplus T_2 \oplus N_1 \oplus N_2\oplus \la\delta_d\ra \oplus \la\delta_\phi\ra \oplus \la e_i\ra \to H^4(X_5', \IQ)$$

%The direct summands $T_1, T_2, \la\delta_d\ra, \la\delta_\phi\ra$ are sub-Hodge structures. Indeed, $Z=\{\eta\in H^2(X_4',\IC): \eta^2=0\}$ has irreducible components in $T_{1, \IC}, T_{2, \IC}$, and the irreducible components generate the latter vector spaces respectively. 

\end{document}